\documentclass{amsart}
\usepackage{amsmath}
\usepackage{amssymb}
\usepackage{amsthm}
\usepackage[msc-links,abbrev]{amsrefs}
\usepackage{hyperref}
\usepackage{mathrsfs}
\usepackage{mathtools}
\usepackage{slashed}

\DeclareMathOperator{\tr}{tr}
\DeclareMathOperator{\Vol}{Vol}
\DeclareMathOperator{\dvol}{dV}

\DeclareMathOperator{\supp}{supp}

\DeclareMathOperator{\Ric}{Ric}

\DeclareMathOperator{\Conf}{Conf}

\DeclareMathOperator{\sech}{sech}

\newcommand{\defn}[1]{{\boldmath\bfseries#1}}

\newcommand{\sff}{I\!I}
\newcommand{\tfsff}{\mathring{\sff}}

\newcommand{\CC}{C_c}

\newcommand{\hg}{\widehat{g}}
\newcommand{\hh}{\widehat{h}}

\newcommand{\hM}{\widehat{M}}

\newcommand{\hSigma}{\widehat{\Sigma}}

\newcommand{\lp}{\langle}
\newcommand{\rp}{\rangle}
\newcommand{\lv}{\lvert}
\newcommand{\rv}{\rvert}
\newcommand{\lV}{\lVert}
\newcommand{\rV}{\rVert}

\newcommand{\mD}{\mathcal{D}}
\newcommand{\mE}{\mathcal{E}}
\newcommand{\mF}{\mathcal{F}}

\newcommand{\mQ}{\mathcal{Q}}

\newcommand{\bN}{\mathbb{N}}

\newcommand{\bR}{\mathbb{R}}

\newcommand{\sD}{\mathscr{D}}

\newcommand{\Yext}{Y^{\mathrm{ext}}}

\def\sideremark#1{\ifvmode\leavevmode\fi\vadjust{\vbox to0pt{\vss
 \hbox to 0pt{\hskip\hsize\hskip1em
 \vbox{\hsize3cm\tiny\raggedright\pretolerance10000
 \noindent #1\hfill}\hss}\vbox to8pt{\vfil}\vss}}}

\newcommand{\suchthat}{\mathrel{}:\mathrel{}}

\newtheorem{theorem}{Theorem}[section]
\newtheorem{proposition}[theorem]{Proposition}
\newtheorem{lemma}[theorem]{Lemma}
\newtheorem{corollary}[theorem]{Corollary}

\theoremstyle{definition}

\theoremstyle{remark}
\newtheorem{remark}[theorem]{Remark}

\numberwithin{equation}{section}

\begin{document}

\title{The sharp $L^2$-Michael--Simon inequality}
\begin{abstract}
    Let $n \geq 3$.
    We prove that if $j \colon (\Sigma^n,g) \to (\mathbb{R}^N,dx^2)$ is an isometric immersion, then
    \begin{equation*}
        \int_\Sigma \left( \lvert \nabla u \rvert^2 + \frac{n(n-2)}{4}\lvert H \rvert^2 u^2 \right) \geq \frac{n(n-2)}{4}\mathrm{Vol}(S^n)^{2/n}\left( \int_\Sigma \lvert u \rvert^{\frac{2n}{n-2}} \right)^{\frac{n-2}{n}}
    \end{equation*}
    for all $u \in W^{1,2}(\Sigma)$, and characterize immersions for which equality is realized.
    Our proof exploits the conformal invariance of the inequality, and hence extends to sharp Michael--Simon inequalities for submanifolds of the sphere, of hyperbolic space, and of other conformally developable manifolds.
    As an application, we derive Chen--Fenchel--Willmore-type inequalities relating $L^p$-norms of the mean curvature vector to the volume of a compact submanifold of a simply-connected spaceform.
\end{abstract}
\author{Jeffrey S.\ Case}
\address{Department of Mathematics \\ Penn State University \\ University Park, PA 16802 \\ USA}
\email{jscase@psu.edu}
\author{Dawit Mengesha}
\address{Department of Mathematics \\ Penn State University \\ University Park, PA 16802 \\ USA}
\email{dym5466@psu.edu}
\keywords{Michael--Simon inequality; Sobolev inequality; conformal invariant; Chen--Fenchel--Willmore inequality}
\subjclass[2020]{Primary 53C42; Secondary 26D15, 46E35}

\maketitle
\section{Introduction}

The classical \defn{$L^2$-Sobolev inequality}~\cites{Aubin1976s,Talenti1976} states that if $n \geq 3$, then
\begin{equation}
    \label{eqn:sobolev}
    \int_{\mathbb{R}^n} \lvert \nabla u \rvert^2 \dvol \geq A_n\left( \int_{\mathbb{R}^n} \lvert u \rvert^{\frac{2n}{n-2}} \dvol \right)^{\frac{n-2}{n}}
\end{equation}
for all $u \in W^{1,2}(\mathbb{R}^n)$, with equality if and only if there are constants $a \in \mathbb{R}$, $b > 0$, and $x_0 \in \mathbb{R}^n$ such that $u(x) = a\bigl( b + \lvert x-x_0 \rvert^2 \bigr)^{-\frac{n-2}{2}}$.
Here all geometric quantities are computed with respect to the Euclidean metric and
\begin{equation}
    \label{eqn:aubin-talenti}
    A_n := \frac{n(n-2)}{4}\Vol(S^n)^{\frac{2}{n}}
\end{equation}
is the \defn{Albin--Talenti constant}, expressed in terms of the intrinsic volume of the unit $n$-sphere.
A fundamental property of Inequality~\eqref{eqn:sobolev} is its conformal invariance.
This fact leads to a proof of the sharp $L^2$-Sobolev inequality that avoids symmetric rearrangement~\cite{FrankLieb2012b} and to a proof of the sharp functional inequality
\begin{equation}
    \label{eqn:sobolev-sphere}
    \int_{S^n} \Bigl( \lvert \nabla u \rvert^2 + \frac{n(n-2)}{4}u^2 \Bigr) \dvol \geq A_n\left( \int_{S^n} \lvert u \rvert^{\frac{2n}{n-2}} \dvol \right)^{\frac{n-2}{n}}
\end{equation}
on the round $n$-sphere $(S^n,d\theta^2)$~\cite{Beckner1993};
it is also fundamental to the resolution of the Yamabe Problem~\cite{LeeParker1987}.
Notably, equality holds for a nonzero function $u$ in Inequality~\eqref{eqn:sobolev-sphere} if and only if $\lvert u \rvert^{\frac{4}{n-2}}d\theta^2$ is a round metric, explaining the geometric significance of the constant $A_n$.

The \defn{$L^2$-Michael--Simon inequality}~\cites{MichaelSimon1973,Allard1972} states that for integers $N \geq n \geq 3$, there is a constant $C_{n,N}>0$ such that if $j \colon (\Sigma^n,g) \to (\mathbb{R}^N,dx^2)$ is an isometric immersion, then
\begin{equation}
    \label{eqn:michael-simon}
    \int_{\Sigma} \Bigl( \lvert \nabla u \rvert^2 + \frac{n(n-2)}{4}\lvert H \rvert^2 u^2 \Bigr) \dvol \geq C_{n,N}\left( \int_{\Sigma} \lvert u \rvert^{\frac{2n}{n-2}} \dvol \right)^{\frac{n-2}{n}}
\end{equation}
for all $u \in W^{1,2}(\Sigma)$.
Here $H := \frac{1}{n}\tr \sff$ is the mean curvature of $j$ and the constant in front of $\lv H \rv^2$ is chosen based on the conformal interpretation of the $L^2$-Michael--Simon inequality described below.
Inequality~\eqref{eqn:michael-simon} follows from an $L^1$-analogue proven by Michael and Simon~\cite{MichaelSimon1973};
see also the work of Bombieri, De Giorgi, and Miranda~\cite{BombieriDeGiorgiMiranda1969} for the case of minimal submanifolds of $\bR^n$.
A crucial point is that $C_{n,N}$ depends only on the dimension and codimension of $j(\Sigma)$, which leads to many applications;
e.g.\ the classification of stable minimal hypersurfaces in $\bR^n$~\cites{SchoenSimonYau1975,ChodoshLi2023,Bellettini2025}.

In analogy with the Sobolev inequality, it is desirable to know the best constant in the $L^2$-Michael--Simon inequality and its geometric significance.
Brendle~\cite{Brendle2021} made a major breakthrough by proving the sharp $L^1$-Michael--Simon inequality when $N \leq n+2$.
In the case of minimal submanifolds in Euclidean space, this estimate implies, using a rearrangement argument and the co-area formula~\cites{Brendle2023,BaloghKristalyMester2026}, that $C_{n,N}=A_n$ when $N \leq n+2$.
Brendle also gave an explicit estimate for $C_{n,N}$, though his estimate tends to zero as $N \to \infty$.
This estimate was improved by Balogh, Krist\'aly, and Mester~\cite{BaloghKristalyMester2026} to a codimension-independent estimate for $C_{n,N}$ on minimal submanifolds;
their estimate satisfies $C_{n,n+m} / A_n \to 1$ as $n \to \infty$ for any fixed codimension $m$.
Han and Xu~\cite{HanXu2026} recently extended Brendle's sharp estimate to dimensions $N \leq n+4$.

Our main result computes the best constant in the $L^2$-Michael--Simon inequality;
it is the Aubin--Talenti constant, regardless of codimension.

\begin{theorem}
    \label{thm:sharp-michael-simon}
    Let $j \colon (\Sigma^n,g) \to (\mathbb{R}^N,dx^2)$, $n \geq 3$, be an isometric immersion.
    Then
    \begin{equation}
        \label{eqn:sharp-michael-simon}
        \int_\Sigma \Bigl( \lvert \nabla u \rvert^2 + \frac{n(n-2)}{4}\lvert H \rvert^2 u^2 \Bigr) \dvol \geq A_n \left( \int_\Sigma \lvert u \rvert^{\frac{2n}{n-2}} \dvol \right)^{\frac{n-2}{n}}
    \end{equation}
    for all $u \in W^{1,2}(\Sigma)$.
    Moreover, equality holds for some nonzero $u \in W^{1,2}(\Sigma)$ if and only if $(\Sigma^n,u^{\frac{4}{n-2}}g)$ is quasi-equal to $(S^n,d\theta^2)$ and $j$ is umbilic.
\end{theorem}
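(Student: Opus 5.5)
The plan is to exploit the conformal invariance announced in the abstract and to transfer the problem from $\bR^N$ to the round sphere $S^N$. There the extra zeroth-order term can be handled by a Hersch/Frank--Lieb type balancing argument.

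\emph{Conformal invariance and reduction to $S^N$.} The Gauss equation gives $R = n(n-1)\lvert H\rvert^2 - \lvert\tfsff\rvert^2$. Hence
\[
\frac{n(n-2)}{4}\lvert H\rvert^2 = \frac{n-2}{4(n-1)}\bigl(R + \lvert \tfsff\rvert^2\bigr).
\]
Thus the left-hand side of \eqref{eqn:sharp-michael-simon} equals
\[
\mE(u) := \int_\Sigma \Bigl( \lvert\nabla u\rvert^2 + \frac{n-2}{4(n-1)}R\,u^2 + \frac{n-2}{4(n-1)}\lvert\tfsff\rvert^2u^2\Bigr).
\]
I would show that $\mE$ is invariant under simultaneous conformal changes of the ambient metric and of $u$. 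The first two terms form the conformal Laplacian. For the third, $\lvert\tfsff\rvert^2_g$ scales like $e^{-2\varphi}$ when $g\mapsto e^{2\varphi}g$, so $\lvert\tfsff\rvert^2u^2\,\dvol$ has the correct weight. The right-hand side is invariant as well.

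Composing $j$ with inverse stereographic projection gives an immersion $\Sigma\to S^N\subset\bR^{N+1}$. For this immersion the Gauss equation reads $R = n(n-1)(1+\lvert H\rvert^2)-\lvert\tfsff\rvert^2$. Therefore \eqref{eqn:sharp-michael-simon} is equivalent to
\[
\int_\Sigma\Bigl(\lvert\nabla u\rvert^2+\frac{n(n-2)}{4}(1+\lvert H\rvert^2)u^2\Bigr)\geq A_n\Bigl(\int_\Sigma\lvert u\rvert^{\frac{2n}{n-2}}\Bigr)^{\frac{n-2}{n}}
\]
for submanifolds of $S^N$, with $H$ now the mean curvature in $S^N$. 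By density it suffices to treat nonnegative $u\in\CC^\infty(\Sigma)$.

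\emph{Proof on the sphere.} I would follow Frank--Lieb and Hersch. Let $x=(x_1,\dots,x_{N+1})$ be the position vector. The conformal group of $S^N$ acts on pairs $(j,u)$ preserving both sides. A degree (Brouwer) argument then lets me normalize so that $\int_\Sigma x\,u^{\frac{2n}{n-2}}=0$.

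The key identities are $\sum_i x_i^2=1$ and $\sum_i\lvert\nabla x_i\rvert^2=n$. They give
\[
\sum_i\int\lvert\nabla(x_iu)\rvert^2=\int\bigl(\lvert\nabla u\rvert^2+nu^2\bigr).
\]
The mean curvature enters through $\Delta x=-nx+nH$. I would work with an (approximate) maximizer of the ratio $\mE(u)/\lVert u\rVert^2_{2n/(n-2)}$, and test the second variation of the quotient along $x_iu$; the balancing condition makes these admissible directions. Summing over $i$ should give an inequality of the form: the Rayleigh quotient is at least the value for $u\equiv1$ on an equatorial $S^n$, namely $A_n$.

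The $\lvert H\rvert^2$ term must exactly absorb the error produced by $\Delta x$ not being $-nx$, via $\lvert x\rvert=1$ and $H\perp x$. I expect this to be the main obstacle, both in the computation and in justifying the variational argument without compactness of $\Sigma$. For the latter, one route is to run the argument with exact minimizers on compact approximations, or to use a subcritical approximation.

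\emph{Equality.} Equality in the reduction forces $\tfsff=0$ wherever $u\neq0$, and hence everywhere by unique continuation for the Euler--Lagrange equation; so $j$ is umbilic. Equality in the sphere step forces $u^{\frac{4}{n-2}}g$ to have constant scalar curvature $n(n-1)$, with all the $x_iu$ in the kernel of the second variation. Obata's theorem, together with a volume count, then shows that $(\Sigma,u^{\frac{4}{n-2}}g)$ is quasi-equal to $(S^n,d\theta^2)$. Conversely, umbilic immersions are pieces of round spheres or planes, and the stated $u$ attains equality by \eqref{eqn:sobolev-sphere} and conformal invariance.
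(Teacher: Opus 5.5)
Your reduction to $S^N$ (Gauss equation plus conformal invariance, so that $Q=\frac n2(1+\lv H\rv^2)$) matches the paper's. The sphere step, however, has a genuine gap: summing the second variations does not yield ``the quotient is at least $A_n$.''

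Take an \emph{exact} balanced minimizer $u$ with $\lV u\rV_{2^\ast}=1$, for the metric induced from $S^N$. The sum uses only $\sum_i x_i^2=1$ and $\sum_i\lv\nabla x_i\rv^2=n$; no $\Delta x$ enters, and $\lv H\rv^2$ absorbs nothing. It gives $\int_\Sigma\bigl(\lv\nabla u\rv^2+\frac{n(n-2)}{4}\lv H\rv^2u^2\bigr)\le 0$. So $u$ is constant and $j$ is minimal, and the quotient equals $\frac{n(n-2)}{4}\Vol(\Sigma)^{2/n}$. You cannot compare this with $A_n$ without knowing the volume.

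To see that the second variation really gives no information here, take a closed minimal $\Sigma^n\subset S^N$. Then $u\equiv1$ is automatically balanced (since $\Delta x_i=nx_i$), and the summed second variation vanishes identically. Yet the inequality at $u\equiv1$ is precisely $\Vol(\Sigma)\ge\Vol(S^n)$.

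This volume bound is the missing geometric input. The paper obtains $\Vol(\Sigma)\le\Vol(S^n)$ from the Aubin-type bound $\Yext\le A_n$ (Lemma~\ref{lemma:aubin-estimate}). It then applies Lemma~\ref{lem:minimal-volume}: a minimal immersion into $S^N$ with $1\in W_0^{1,2}(\Sigma)$ and $\Vol(\Sigma)\le\Vol(S^n)$ is totally geodesic and quasi-equal to $S^n$. That lemma is proved by showing the associated varifold is stationary and applying the monotonicity formula to its cone in $\bR^{N+1}$.

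The same lemma gives the equality case, and your route there does not work. The reduction is an identity, so there is no ``equality in the reduction'' forcing $\tfsff=0$. Nor can you drop the $\lv\tfsff\rv^2$ term, since the intrinsic conformal energy alone does not dominate $A_n\lV u\rV_{2^\ast}^2$. Obata's theorem is circular: on a minimal submanifold of $S^N$ one has $\nabla^2x_i=-x_ig+\lp\sff,e_i\rp$, which satisfies Obata's equation only if $j$ is already totally geodesic.

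The second gap is the variational justification. The second-variation inequality holds only at an exact minimizer — a minimizer rather than a maximizer, and not an approximate one — and typically none exists on $\Sigma$ itself. The paper therefore argues by contradiction. If $\Yext(j)<A_n$, the strict inequality persists on some compact $\Omega\subset\Sigma$. There, Lions' concentration compactness (a bubble at $p$ costs at least $A_n\nu_p^{(n-2)/n}$), together with strict concavity of $x\mapsto x^{(n-2)/n}$, excludes concentration and produces a positive minimizer in $W_0^{1,2}(\Omega)$ (Proposition~\ref{prop:aubin-existence}). The Frank--Lieb argument plus volume rigidity then forces $\Yext=A_n$.

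Your suggestion of ``exact minimizers on compact approximations'' points the right way. But the strict inequality $\Yext<A_n$ is exactly what makes those minimizers exist, so the proof must be organized around it. On a piece with nonempty boundary, the conclusion that $u$ is constant already contradicts $u\vert_{\partial\Omega}=0$. The volume rigidity is therefore indispensable precisely for closed $\Sigma$ and for the equality case.
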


All manifolds are taken to be connected in this paper.
We say that $(\Sigma^n,g_u)$ is \defn{quasi-equal} to $(S^n,d\theta^2)$ if $g_u$ has constant positive sectional curvature and $\Sigma$ is diffeomorphic to an open submanifold $\Sigma' \subset S^n$ whose complement has capacity
\begin{equation*}
    \mathrm{cap}(S^n \setminus \Sigma') := \inf_{u \in W^{1,2}(S^n)} \left\{ \int_{S^n} \Bigl( \lv \nabla u \rv^2 + \frac{n(n-2)}{4}u^2 \Bigr) \dvol \suchthat u \vert_{S^n \setminus \Sigma'} = 1 \right\} = 0 .
\end{equation*}
In particular, if $j$ is proper, then equality holds in Inequality~\eqref{eqn:sharp-michael-simon} for some nonzero $u$ if and only if $(\Sigma,g)$ is conformally equivalent to $(\bR^n,dx^2)$ or $(S^n,d\theta^2)$ and $j(\Sigma)$ is an affine $n$-plane or a geodesic sphere in an affine $(n+1)$-plane, respectively.

Our key insight is that the $L^2$-Michael--Simon inequality admits a conformally invariant formulation to which one can adapt the rearrangement-free argument of Frank and Lieb~\cite{FrankLieb2012b} for proving the sharp $L^2$-Sobolev inequality.
An early hint of this conformal invariance appears in the work of El Soufi and Ilias~\cite{ElSoufiIlias1992} extending Reilly's estimate~\cite{Reilly1977e} of the first nonzero eigenvalue of the Laplacian of a compact submanifold of Euclidean space to the case of submanifolds of conformally developable manifolds;
see Theorem~\ref{thm:elsoufi-ilias} for a precise statement.

Brendle's original estimate~\cite{Brendle2021} of the sharp constant in the $L^1$-Michael--Simon inequality uses the Alexandrov--Bakelman--Pucci method (cf.\ \cite{Cabre2008}), while the later reproof by Brendle and Eichmair~\cite{BrendleEichmair2023} and its improvement by Balogh, Krist\'aly, and Mester~\cite{BaloghKristalyMester2026} uses optimal transport.
Notably, these methods produce $L^p$-inequalities for any $p \in [1,\infty)$ and can be extended to other manifolds under suitable curvature assumptions (e.g.\ \cites{DongLinLu2024,Wang2024,Brendle2023n}).
The difficulty in obtaining the best constant in the $L^2$-Michael--Simon inequality using these methods stems from the fact that they involve studying an auxiliary second-order PDE or optimal transport problem, and the ``best'' choice of problem is not clear.
Moreover, these methods do not extend to higher-order sharp inequalities.
By contrast, conformal invariance removes any choices, and the Frank--Lieb method can be extended to certain higher-order or fully nonlinear inequalities (cf.\ \cite{Case2019fl}).

We now elaborate on the conformal invariance of the $L^2$-Michael--Simon inequality.
Let $j \colon (\Sigma^n,g) \to (M^N,h)$, $N \geq 3$, be an isometric immersion.
The \defn{extrinsic conformal Laplacian}~\cite{CaseGrahamKuo2023} $L \colon C^\infty(\Sigma) \to C^\infty(\Sigma)$ is defined by
\begin{equation}
    \label{eqn:defn-extrinsic-conformal-laplacian}
    \begin{split}
    L & := \Delta + \frac{n-2}{2}Q , \\
    Q & := \tr_g j^\ast P + \frac{n}{2}\lvert H \rvert^2 ,
    \end{split}
\end{equation}
where the Laplacian $\Delta := -\nabla^\alpha\nabla_\alpha$ is nonnegative and
\begin{equation*}
    P := \frac{1}{N-2}\left( \Ric - \frac{R}{2(N-1)}h \right)
\end{equation*}
is the Schouten tensor of $h$.
Direct computation shows that if $(M,h)$ is locally conformally flat or if $N=n+1$, then
\begin{equation}
    \label{eqn:q-to-sectional}
    \frac{2}{n}Q = \lvert H \rvert^2 + \frac{1}{n(n-1)}\sum_{\alpha,\beta=1}^n K(j_\ast\partial_\alpha , j_\ast\partial_\beta)
\end{equation}
at $p \in \Sigma$, where $\{\partial_\alpha \}$ is an orthonormal basis for $T_p\Sigma$ and $K(j_\ast\partial_\alpha , j_\ast\partial_\beta)$ is the sectional curvature with respect to $h$ of the plane spanned by $j_\ast\partial_\alpha$ and $j_\ast\partial_\beta$.
Thus $Q$ is the integrand in the El Soufi--Ilias eigenvalue estimate~\cite{ElSoufiIlias1992}.

The extrinsic conformal Laplacian has many important properties.
It is natural, which implies that if $\Psi \colon (\Sigma,g) \to (\hSigma,\hg)$ and $\Phi \colon (M,h) \to (\hM,\hh)$ are isometries, then the extrinsic conformal Laplacians of $j$ and $\widehat{\jmath} := \Phi \circ j \circ \Psi^{-1}$ are related by
\begin{equation}
    \label{eqn:natural}
    \Psi^\ast ( \widehat{L}\widehat u ) = L(\Psi^\ast\widehat{u}) ;
\end{equation}
see Section~\ref{sec:sharpness} for further details.
It is \defn{formally self-adjoint};
i.e.
\begin{equation*}
    \int_\Sigma uLv \dvol = \int_\Sigma vLu \dvol
\end{equation*}
for all $u,v \in \CC^\infty(\Sigma)$.
It is \defn{conformally invariant};
i.e.\ if $\widehat{h} = e^{2\Upsilon}h$, then
\begin{equation}
    \label{eqn:conformal-transformation}
    \widehat{L}(u) = e^{-\frac{n+2}{2}j^\ast\Upsilon} L\left( e^{\frac{n-2}{2}j^\ast\Upsilon}u \right)
\end{equation}
for all $u \in C^\infty(\Sigma)$.
This determines the conformal transformation law of $Q$ when $n\not=2$;
when $n=2$ one has
\begin{equation}
    \label{eqn:q-transformation}
    e^{2j^\ast\Upsilon}\widehat{Q} = Q + L(\Upsilon) .
\end{equation}
In particular, $\int Q\dvol$ is a conformal invariant of compact immersed surfaces that generalizes the Willmore functional.
Since $L$ and $Q$ are local objects and the transformation laws~\eqref{eqn:conformal-transformation} and~\eqref{eqn:q-transformation} depend only on $j^\ast\Upsilon$, both $L$ and $Q$ are defined whenever $j \colon (\Sigma^n,g) \to (M^N,h)$ is a conformal immersion~\cite{CaseGrahamKuo2023}.

\begin{remark}
    The original motivation~\cite{CaseGrahamKuo2023} for defining the extrinsic conformal Laplacian by Equation~\eqref{eqn:defn-extrinsic-conformal-laplacian} was to ensure that $Q$ is constant for any minimal submanifold of an Einstein manifold.
    Indeed, if $j \colon (\Sigma^n,g) \to (M^N,h)$ is an isometric immersion into an Einstein manifold with $\Ric^h = (N-1)\kappa h$, then
    \begin{equation}
        \label{eqn:q-on-einstein}
        Q = \frac{n}{2} \left( \lv H \rv^2 + \kappa \right) .
    \end{equation}
\end{remark}

Theorem~\ref{thm:sharp-michael-simon} follows from a stronger result about sharp $L^2$-Michael--Simon inequalities for manifolds that are conformally immersed in the round sphere $(S^N,d\theta)$.

\begin{theorem}
    \label{thm:michael-simon-sphere}
    Let $j \colon (\Sigma^n,g) \to (S^N,d\theta^2)$, $n \geq 3$, be a conformal immersion.
    Then
    \begin{equation}
        \label{eqn:michael-simon-sphere}
        \int_\Sigma \Bigl( \lvert \nabla u \rvert^2 + \frac{n-2}{2}Qu^2 \Bigr) \dvol \geq A_n \left( \int_\Sigma \lvert u \rvert^{\frac{2n}{n-2}} \dvol \right)^{\frac{n-2}{n}}
    \end{equation}
    for all $u \in W^{1,2}(\Sigma)$.
    Moreover, equality holds for some nonzero $u \in W^{1,2}(\Sigma)$ if and only if $(\Sigma^n,u^{\frac{4}{n-2}}g)$ is quasi-equal to $(S^n,d\theta^2)$ and, after left-compositing with a conformal diffeomorphism of $(S^N,d\theta^2)$ if necessary, $j$ is a totally geodesic isometric embedding.
\end{theorem}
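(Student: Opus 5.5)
By the conformal covariance~\eqref{eqn:conformal-transformation}, the quantity $\mathcal{E}(u)=\int_\Sigma uLu\,\dvol$, together with $\int_\Sigma|u|^{2n/(n-2)}\dvol$, is unchanged under $g\mapsto \phi^{4/(n-2)}g$, $u\mapsto\phi^{-1}u$. So I would reduce to proving~\eqref{eqn:michael-simon-sphere} for a chosen metric in the conformal class, and I fix $g=j^\ast d\theta^2$. Since $(S^N,d\theta^2)$ is Einstein with $\kappa=1$, \eqref{eqn:q-on-einstein} gives $\frac{n-2}{2}Q=\frac{n(n-2)}{4}(|H|^2+1)\ge \frac{n(n-2)}{4}$. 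Hence
\[
\mathcal{E}(u)\ge \int_\Sigma\Bigl(|\nabla u|^2+\frac{n(n-2)}{4}u^2\Bigr)\dvol ,
\]
with equality only where $H=0$. The task is then to prove the sharp Sobolev inequality with the round-sphere potential on an immersed submanifold of $S^N$, where $j$ is isometric.

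For this I would adapt Frank--Lieb. First, by density I assume $u\in\CC^\infty(\Sigma)$ and $u\ge0$. Next I argue that a minimizer of the Rayleigh quotient $\mathcal{E}(u)/\|u\|_{2n/(n-2)}^2$ exists in a suitable relaxed sense, or else work with near-minimizers via a concentration argument. Failing that, I would test the inequality directly with the coordinate functions $x_i\circ j$, $i=1,\dots,N+1$, of $S^N\subset\bR^{N+1}$.

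The key step is the Frank--Lieb balancing. Using conformal diffeomorphisms $\Phi$ of $S^N$, which act naturally on $j$ by~\eqref{eqn:natural} and preserve both sides, I would arrange $\int_\Sigma x_i|u|^{2n/(n-2)}\dvol=0$ for all $i$. This needs a degree/fixed-point argument on the ball $B^{N+1}$ parametrizing the conformal group modulo rotations, with boundary behavior as $\Phi$ concentrates at a point. Then I apply the second-variation inequality with test functions $x_iu$. Summing over $i$ and using $\sum x_i^2=1$ and $\sum|\nabla x_i|^2=n$ (since $j$ is isometric), the cross terms vanish, and Frank--Lieb's computation yields
\[
\mathcal{E}(u)\ge A_n\|u\|_{2n/(n-2)}^2
\]
once one knows $u$ is a critical point. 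For a genuine proof without existence of minimizers, I would instead use the El Soufi--Ilias style inequality (Theorem~\ref{thm:elsoufi-ilias}) combined with a Frank--Lieb-type rearrangement-free iteration. I expect the main obstacle to be this existence/compactness issue on a noncompact, possibly incomplete $\Sigma$: a minimizing sequence can lose mass at infinity. My first attempt would be to show that loss of compactness can only produce the value $A_n$, by blowing up to an affine plane via stereographic projection. That would make the infimum either attained or equal to $A_n$.

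For the equality case, equality forces $H\equiv0$ for the metric $j^\ast d\theta^2$ after balancing. The Frank--Lieb step then forces $x_iu$ to be optimal directions, giving $\mathrm{Hess}\,x_i$ proportional to the metric and hence an umbilic, totally geodesic immersion. This also yields that $u^{4/(n-2)}g$ is round. Finally, removability of the complement in $S^n$ follows from $u\in W^{1,2}$ via the capacity-zero characterization. The converse is a direct check using~\eqref{eqn:sobolev-sphere}.
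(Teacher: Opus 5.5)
\textbf{There are genuine gaps.} The first is in your reduction. With $g=j^\ast d\theta^2$ you may not discard $|H|^2$: the reduced inequality $\int_\Sigma(|\nabla u|^2+\frac{n(n-2)}{4}u^2)\dvol\ge A_n\|u\|_{2n/(n-2)}^2$ is false on submanifolds of $S^N$. For a geodesic sphere of radius $\rho<\pi/2$ in a totally geodesic $S^{n+1}$, with $u=1$, the left side is $\sin^2\rho$ times the right side. Restoring $|H|^2=\cot^2\rho$ gives equality, and $|H|^2$ is exactly the term the second-variation argument forces to vanish.

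The more serious gap is existence of a minimizer. The Frank--Lieb computation needs nonnegative second variation, so a critical point does not suffice, and none of your fallbacks supplies a minimizer. Testing with $x^i\circ j$ only bounds the infimum from above, and Theorem~\ref{thm:elsoufi-ilias} is an eigenvalue estimate. Blowing up at a point does not control mass escaping through the ends of a noncompact or incomplete $\Sigma$. The missing idea is to pass to compact pieces. Since $\Yext(j)$ is an infimum over compactly supported functions, $\Yext(j)<A_n$ implies $\Yext(j\vert_\Omega)<A_n$ for some compact $\Omega\subset\Sigma$. On $\Omega$, with Dirichlet data, compactness can only be lost by point concentration, and each atom costs at least $A_n\nu_p^{(n-2)/n}$ (Lemma~\ref{lem:concentration-compactness}). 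The strict inequality and concavity of $x\mapsto x^{(n-2)/n}$ then exclude atoms and give a positive minimizer (Proposition~\ref{prop:aubin-existence}). Applying Frank--Lieb to that minimizer yields the contradiction.

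You have also misidentified what Frank--Lieb produces, and this breaks your equality argument. For a balanced minimizer with $g=j^\ast d\theta^2$, summing the second variations in the directions $x^iu$ gives $\int_\Sigma(|\nabla u|^2+\frac{n(n-2)}{4}|H|^2u^2)\dvol\le0$. Hence $u$ is constant and $H=0$. It does not give $\mE(u)\ge A_n\|u\|_{2n/(n-2)}^2$: the quotient then equals $\frac{n(n-2)}{4}\Vol(\Sigma)^{2/n}$, and you must still exclude $\Vol(\Sigma)<\Vol(S^n)$ and identify $\Sigma$.

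Your claim that optimality of the directions $x^iu$ forces $\mathrm{Hess}\,x^i$ to be proportional to the metric is false. On any minimal $\Sigma\subset S^N$ one has $L(x^i)=\frac{n(n+2)}{4}x^i$, so for constant $u$ every $x^iu$ lies in the kernel of the second variation automatically; the minimal Clifford tori are examples. Moreover $\tfsff$ does not enter $L$ here, so no umbilicity can come out of this step.

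The paper closes the argument in two steps. Lemma~\ref{lemma:aubin-estimate} gives $\Yext(j)\le A_n$, hence $\Vol(\Sigma)\le\Vol(S^n)$. Then Lemma~\ref{lem:minimal-volume} applies, using $1\in W_0^{1,2}(\Sigma)$, which holds because the constant minimizer lies there. It shows $\Sigma$ defines a stationary varifold whose cone in $\bR^{N+1}$ is, by monotonicity, a flat $(n+1)$-disk. So $j$ is totally geodesic and $S^n\setminus\Sigma$ has zero capacity. This rigidity is what yields the equality characterization, and it is also needed for the inequality itself when $\Sigma$ is closed, since then $\Omega=\Sigma$ and no boundary condition excludes a constant minimizer.
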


A manifold $(M^N,h)$ is \defn{conformally developable} if there is conformal immersion $\Phi \colon (M^N,h) \to (S^N,d\theta^2)$;
e.g.\ simply-connected locally conformally flat manifolds are conformally developable~\cite{Kuiper1949}.
Theorem~\ref{thm:michael-simon-sphere} trivially extends to conformal immersions into conformally developable manifolds.
However, it does not extend to conformal immersions into locally conformally flat manifolds;
e.g.\ if $T^n$ is a totally geodesic torus in $T^N$, then $Q=0$ and hence the best constant in Inequality~\eqref{eqn:michael-simon-sphere} is $0$.
A particularly interesting family of conformally developable manifolds are simply-connected spaceforms.

\begin{corollary}
    \label{cor:michael-simon-spaceform}
    Let $j \colon (\Sigma^n,g) \to (M_\kappa^N,h)$, $n \geq 3$, be an isometric immersion into a simply-connected manifold with constant sectional curvature $\kappa$.
    Then
    \begin{equation*}
        \int_\Sigma \Bigl( \lvert \nabla u \rvert^2 + \frac{n(n-2)}{4}\bigl( \lvert H \rvert^2 + \kappa \bigr) u^2 \Bigr) \dvol \geq A_n \left( \int_\Sigma \lvert u \rvert^{\frac{2n}{n-2}} \dvol \right)^{\frac{n-2}{n}}
    \end{equation*}
    for all $u \in W^{1,2}(\Sigma)$.
    Moreover, equality holds for some nonzero $u \in W^{1,2}(\Sigma)$ if and only if $(\Sigma^n,u^{\frac{4}{n-2}}g)$ is quasi-equal to $(S^n,d\theta^2)$ and $j$ is umbilic.
\end{corollary}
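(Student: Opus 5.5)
The plan is to deduce Corollary~\ref{cor:michael-simon-spaceform} directly from Theorem~\ref{thm:michael-simon-sphere}, using the conformal invariance of $L$ and the explicit formula~\eqref{eqn:q-on-einstein} for $Q$.

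First, since $(M_\kappa^N,h)$ is simply connected with constant sectional curvature, it is locally conformally flat, hence conformally developable. Concretely, there is a conformal immersion $\Phi\colon (M^N_\kappa,h)\to(S^N,d\theta^2)$:
\begin{itemize}
\item for $\kappa>0$, $\Phi$ is a homothety onto $S^N$;
\item for $\kappa=0$, $\Phi$ is inverse stereographic projection;
\item for $\kappa<0$, $\Phi$ is the composition of the Poincar\'e ball model with inverse stereographic projection onto a hemisphere.
\end{itemize}
Then $\Phi\circ j\colon(\Sigma,g)\to(S^N,d\theta^2)$ is a conformal immersion.

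Second, I identify the quantity $Q$ appearing in Theorem~\ref{thm:michael-simon-sphere}. Both $L$ and $Q$ are defined for conformal immersions, and $L$ is computed using any metric in the conformal class on the target, together with the induced metric on $\Sigma$. Write $\Phi^\ast d\theta^2=e^{2\Upsilon}h$. By~\eqref{eqn:conformal-transformation}, the operator $\Delta+\frac{n-2}{2}Q$ attached to $(\Sigma,g)$ for the conformal immersion $\Phi\circ j$ is the one computed from $j$ into $(M,h)$ with induced metric $g$. Indeed, $g$ is the pullback of $h$, so we compute with $h$ itself and no conformal factor appears.

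Since $h$ is Einstein with $\Ric=(N-1)\kappa h$, Equation~\eqref{eqn:q-on-einstein} gives $Q=\frac n2(\lv H\rv^2+\kappa)$. Hence $\frac{n-2}{2}Q=\frac{n(n-2)}{4}(\lv H\rv^2+\kappa)$, and the inequality of Theorem~\ref{thm:michael-simon-sphere} becomes exactly the claimed inequality. The only subtle point is making sure the statement of Theorem~\ref{thm:michael-simon-sphere} for a conformal immersion means the data computed in $g$ with any representative of the target conformal class. That is precisely the content of the remark after~\eqref{eqn:q-transformation}, so this step is bookkeeping.

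Third, I handle the equality case, which I expect to be the main point requiring care. By Theorem~\ref{thm:michael-simon-sphere}, equality holds for some nonzero $u$ if and only if two conditions hold:
\begin{itemize}
\item $(\Sigma,u^{4/(n-2)}g)$ is quasi-equal to the round sphere;
\item after composing with a conformal diffeomorphism $\Psi$ of $S^N$, the immersion $\Psi\circ\Phi\circ j$ is a totally geodesic isometric embedding.
\end{itemize}
It remains to translate the second condition into ``$j$ is umbilic.''

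Umbilicity, meaning that the trace-free second fundamental form $\tfsff$ vanishes, is conformally invariant. Moreover, totally geodesic immersions are umbilic. So if equality holds, $j$ is umbilic.

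Conversely, suppose $j$ is umbilic. Then $\Phi\circ j$ is an umbilic immersion into $S^N$. By the classical classification, its image lies in a round $n$-sphere, that is, the intersection of $S^N$ with an affine $(n+1)$-plane. A conformal diffeomorphism of $S^N$ moves any such round $n$-sphere to a totally geodesic great $S^n$. Hence $\Psi\circ\Phi\circ j$ is totally geodesic, though only conformal onto its image, with the conformal factor absorbed into $u$.

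In that case, Theorem~\ref{thm:michael-simon-sphere} yields equality exactly when the quasi-equal condition holds, by choosing $u$ as the conformal factor realizing the round metric. I would check that the ``isometric'' wording in Theorem~\ref{thm:michael-simon-sphere} is compatible with this. The point is that the pair $(g,u)$ can be replaced by $(u^{4/(n-2)}g,1)$ using~\eqref{eqn:conformal-transformation}, after which the immersion becomes isometric onto an open, full-capacity subset of the great sphere.
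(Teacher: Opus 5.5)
Your overall route is the paper's own: conformal developability of $M_\kappa^N$, then Equation~\eqref{eqn:q-on-einstein}, then Theorem~\ref{thm:michael-simon-sphere}. The inequality and the ``only if'' half of the equality case are fine. The gap is in the ``if'' half. You claim that passing from $(g,u)$ to $(u^{4/(n-2)}g,1)$ makes $F:=\Psi\circ\Phi\circ j$ isometric onto a full-capacity subset of a great sphere, but nothing you wrote gives this. All you know is that $F$ is a conformal local diffeomorphism into $S^n$. So $F^\ast d\theta^2=e^{2\phi}g_u$, with both metrics of constant curvature, and relabelling the pair does not change $\phi$. Nor have you shown that $F$ is injective with full-capacity image.

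Here is a concrete case where your argument breaks. Take $M_\kappa^N=S^N$ and let $j$ be the inclusion of a great $S^n$. Let $u$ be the conformal factor of a non-isometric M\"obius map $m$ of $S^n$, so that $g_u=m^\ast d\theta^2$. Your recipe allows $\Psi=\mathrm{id}$, yet $F=j$ is not isometric for $g_u$. The correct choice is the extension of $m$, which is dictated by $u$ and not by the image of $j$. If instead you pick $u$ so that $g_u=F^\ast d\theta^2$, you lose the link to the quasi-equality hypothesis, which concerns the given $u$. The paper's one-line proof does not spell this step out either, but the justification you offer for it is not valid.

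To close the gap, use Liouville's theorem. Let $\iota\colon(\Sigma,g_u)\to(\Sigma',d\theta^2)$ be the isometry supplied by quasi-equality. Then $F\circ\iota^{-1}$ is a conformal map on the connected open set $\Sigma'\subset S^n$, $n\geq 3$, so it is the restriction of some $m\in\Conf(S^n)$. Extend $m$ to $S^N$ and replace $\Psi$ by $m^{-1}\circ\Psi$. The immersion then becomes an isometric totally geodesic embedding of $(\Sigma,g_u)$ onto $\Sigma'$, and Theorem~\ref{thm:michael-simon-sphere} applies.

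Alternatively, bypass the equality characterization entirely. By the formula in the proof of Lemma~\ref{lem:conformal-invariance}, $\tfsff=0$ and $W=0$ give $L=L_{\mathrm{int}}$, and this identity is conformally covariant, so it holds for $g_u$. Normalize $g_u$ to have curvature one. Then
\[
\mathcal{E}^g(u)=\mathcal{E}^{g_u}(1)=\tfrac{n-2}{4(n-1)}\int_\Sigma R^{g_u}\dvol_{g_u}=\tfrac{n(n-2)}{4}\Vol(S^n),
\]
while
\[
\int_\Sigma u^{2n/(n-2)}\dvol_g=\Vol_{g_u}(\Sigma)=\Vol(S^n).
\]
Together these give equality directly. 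The capacity-zero condition is what justifies both identities for the non-compactly supported function $1$.
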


Corollary~\ref{cor:michael-simon-spaceform} is new except for submanifolds of $\bR^N$, $N \leq n+4$, and of $S^{N}$, $N \leq n+3$, both of which follow from Han and Xu's result~\cite{HanXu2026}.
In hyperbolic space, Cui and Zhao~\cite{CuiZhao2024} proved an $L^1$-Michael--Simon-type inequality for a restricted class of functions on $h$-convex hypersurfaces; Corollary~\ref{cor:michael-simon-spaceform} significantly relaxes these assumptions, though using instead the $L^2$-norm of the gradient.

It is illuminating to compare Theorem~\ref{thm:michael-simon-sphere} to the El Soufi--Ilias estimate~\cite{ElSoufiIlias1992}.

\begin{theorem}
    \label{thm:elsoufi-ilias}
    Let $(\Sigma^n,g)$, $n \geq 2$, be a compact Riemannian manifold and let $j \colon (\Sigma^n,g) \to (S^N,d\theta^2)$ be a conformal immersion.
    Then
    \begin{equation*}
        \int_\Sigma Q \dvol \geq \frac{\lambda_1}{2}\Vol(\Sigma) ,
    \end{equation*}
    where $\lambda_1$ is the first nonzero eigenvalue of the Laplacian of $g$.
\end{theorem}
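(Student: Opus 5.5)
The plan is the Reilly/El Soufi--Ilias argument with a conformal centering step.

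\emph{Test functions.} View $S^N\subset\bR^{N+1}$ and let $x^1,\dots,x^{N+1}$ be the restrictions of the coordinate functions. Since $\Sigma$ is compact, the measure $j_\ast(\dvol_g)$ is a finite measure on $S^N$. When this measure is not a point mass, which is the case since $j$ is an immersion, the Hersch--Li--Yau center of mass argument applies. It yields a conformal diffeomorphism $\phi$ of $S^N$ with
\begin{equation*}
    \int_\Sigma x^A\circ\phi\circ j \,\dvol_g = 0, \qquad A = 1,\dots,N+1 .
\end{equation*}
Replacing $j$ by $\phi\circ j$ is harmless. The metric $g$ on $\Sigma$ is unchanged, and by the conformal invariance~\eqref{eqn:conformal-transformation} and~\eqref{eqn:q-transformation}, $Q$ is computed from the triple $(\Sigma,g,j)$. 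It depends on $j$ only through the conformal class of the target, and conformal diffeomorphisms of the round sphere preserve that class. So we may assume $\int_\Sigma x^A\dvol=0$ for all $A$. Each $u_A := x^A\circ j$ is then admissible in the Rayleigh quotient for $\lambda_1$, and summing gives
\begin{equation*}
    \lambda_1\int_\Sigma \sum_A u_A^2\,\dvol = \lambda_1 \Vol(\Sigma) \leq \int_\Sigma \sum_A \lvert\nabla u_A\rvert^2\,\dvol ,
\end{equation*}
using $\sum_A u_A^2 = 1$.

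\emph{Pointwise identity.} It remains to show
\begin{equation*}
    \sum_A \lvert\nabla u_A\rvert_g^2 \leq 2Q .
\end{equation*}
Write $j^\ast d\theta^2 = e^{2w}g$. Then $\sum_A\lvert\nabla u_A\rvert_g^2 = \tr_g j^\ast d\theta^2 = n e^{2w}$. For the metric $\bar g = e^{2w}g$, the map $j$ is an isometric immersion into the unit sphere. There~\eqref{eqn:q-on-einstein} with $\kappa=1$ gives
\begin{equation*}
    \bar Q = \frac{n}{2}\bigl(\lvert \bar H\rvert^2+1\bigr) \geq \frac{n}{2} .
\end{equation*}
We must convert this into a statement about $Q$ for $g$. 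The relation between $Q_g$ and $Q_{\bar g}$ involves derivatives of $w$, so a pointwise comparison is not available directly. Instead we integrate, using~\eqref{eqn:conformal-transformation} with $\Upsilon$ chosen so that $j^\ast\Upsilon=w$.

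Here it helps to note that $L$ makes sense for conformal immersions. Then $\frac{n-2}{2}\int_\Sigma Q\,\dvol_g = \int_\Sigma 1\cdot L_g 1\,\dvol_g$, and for $n\neq 2$ the conformal covariance gives
\begin{equation*}
    \int_\Sigma 1\cdot L_g 1 = \int_\Sigma e^{\frac{n-2}{2}w}\,\overline{L}\bigl(e^{\frac{n-2}{2}w}\bigr)\,d\bar V ,
\end{equation*}
up to the correct power bookkeeping. This is an energy in which the gradient term of $w$ appears with a favorable sign.

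A cleaner route is to avoid the conformal change altogether. In terms of $g$, the quantity $\sum_A\lvert\nabla u_A\rvert^2$ is literally $\tr_g j^\ast d\theta^2$. Meanwhile $2Q - \tr_g j^\ast d\theta^2$ is precisely the quantity that the ambient Schouten tensor measures: since the round sphere is conformally flat, one may write $2Q = 2\tr_g j^\ast P + n\lvert H\rvert^2$ using any metric in the ambient conformal class. Choose the ambient metric $e^{-2\Upsilon}d\theta^2$ where $\Upsilon$ extends $w$ (locally, near $j(\Sigma)$). Then $j$ is isometric from $(\Sigma,g)$, and we need
\begin{equation*}
    \tr_g j^\ast P + \frac{n}{2}\lvert H\rvert^2 \geq \frac{n}{2} e^{2w}
\end{equation*}
in integrated form. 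Expanding $P$ for the conformally changed metric gives $P = P_{\theta} - \nabla^2\Upsilon + d\Upsilon\otimes d\Upsilon - \tfrac12\lvert d\Upsilon\rvert^2 h$. Its trace along $\Sigma$ splits into a tangential Hessian, which is a divergence plus $H$-terms, and gradient-squared terms. Completing the square with $\frac n2\lvert H\rvert^2$ should leave $\frac n2 e^{2w}\lvert \bar H\rvert^2$, a divergence, and the normal part of $\lvert d\Upsilon\rvert^2$. The normal part is free, since we may choose $\Upsilon$ to have vanishing normal derivative along $j$. Integrating over the compact $\Sigma$ kills the divergence, giving
\begin{equation*}
    \int_\Sigma Q\,\dvol \geq \frac n2\int_\Sigma e^{2w}\,\dvol = \frac12\int_\Sigma\sum_A\lvert\nabla u_A\rvert^2\,\dvol \geq \frac{\lambda_1}{2}\Vol(\Sigma) .
\end{equation*}

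\emph{Main obstacle.} The hard step is this last computation: verifying that, after the conformal change and the choice of normal extension of $w$, the residual terms are a pure divergence plus nonnegative quantities. In particular, for $n=2$ the $\lvert\nabla w\rvert^2$ terms must cancel exactly, consistent with~\eqref{eqn:q-transformation}. I would check it first in the hypersurface case via~\eqref{eqn:q-to-sectional} and then handle general codimension.
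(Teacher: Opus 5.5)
\textbf{Verdict.} Your outline is the paper's own. You center with Lemma~\ref{lem:balance}, insert $u_A=j^\ast x^A$ into the Rayleigh quotient, use $\sum_A\lvert\nabla u_A\rvert_g^2=ne^{2w}$, and reduce everything to
\[
\int_\Sigma Q\,\dvol_g\geq\frac n2\int_\Sigma e^{2w}\,\dvol_g .
\]
The gap is that you never prove this inequality. The route you commit to ends in ``should leave'', and you defer it explicitly as the main obstacle. So the only nontrivial step is missing.

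\textbf{How to close it.} The route you abandoned is exactly the paper's, and it closes the gap in two lines once the bookkeeping is fixed.
\begin{itemize}
\item For $n\geq3$: since $g=e^{-2w}\bar g$, Lemma~\ref{lem:energy-invariance} gives $\frac{n-2}{2}\int_\Sigma Q\,\dvol_g=\mathcal{E}^g(1)=\mathcal{E}^{\bar g}(v)$ with $v=e^{-\frac{n-2}{2}w}$. Note the minus sign in the exponent. Discard $\int\lvert\nabla v\rvert_{\bar g}^2\geq0$ and use $\bar Q\geq\frac n2$. This gives $\mathcal{E}^{\bar g}(v)\geq\frac{n(n-2)}{4}\int_\Sigma v^2\,d\bar V=\frac{n(n-2)}{4}\int_\Sigma e^{2w}\,\dvol_g$, which is the claim.
\item For $n=2$, where this route is unavailable and which you never actually treat: integrate~\eqref{eqn:q-transformation}, here $e^{2w}\bar Q=Q_g+\Delta_gw$. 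This gives $\int_\Sigma Q\,\dvol_g=\int_\Sigma\bar Q\,d\bar V\geq\Vol_{\bar g}(\Sigma)=\int_\Sigma e^{2w}\,\dvol_g$.
\end{itemize}

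\textbf{Your Schouten-tensor route.} It can also be completed, but both its displayed expansion and its predicted residual are off.
\begin{itemize}
\item For $h=e^{-2\Upsilon}d\theta^2$ the correct law is $P^h=P^{\theta}+\nabla^2\Upsilon+d\Upsilon\otimes d\Upsilon-\frac12\lvert d\Upsilon\rvert^2\,d\theta^2$. Your signs are those for $e^{+2\Upsilon}$, and they produce a gradient term of the wrong sign.
\item Take $j^\ast\Upsilon=w$ with vanishing normal derivative. For non-injective $j$ this is only possible locally on $\Sigma$, which suffices because the resulting identity is pointwise. One finds
\[
Q_g=\frac n2e^{2w}\bigl(1+\lvert\bar H\rvert^2\bigr)-\Delta_gw+\frac{n-2}{2}\lvert\nabla w\rvert_g^2 .
\]
\item So what remains is a divergence plus a \emph{tangential} gradient term with coefficient $\frac{n-2}{2}\geq0$, which vanishes exactly when $n=2$. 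It is not the normal part of $\lvert d\Upsilon\rvert^2$, which you have set to zero.
\item Integrating gives the same identity the energy route yields after expanding $\lvert\nabla v\rvert_{\bar g}^2$. The energy route is shorter because the conformal covariance of $L$ has already done the Schouten and mean-curvature bookkeeping.
\end{itemize}

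\textbf{Centering hypothesis.} The centering argument needs $j_\ast\dvol_g$ to have no atoms, as in Lemma~\ref{lem:balance}. Merely not being a point mass is insufficient: an atom carrying most of the mass obstructs centering. Absence of atoms does hold for an immersion, so your conclusion stands.
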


The proofs of Theorems~\ref{thm:michael-simon-sphere} and~\ref{thm:elsoufi-ilias} share two applications of conformal invariance;
these are most clearly described in dimensions $n \geq 3$.
First, Equation~\eqref{eqn:conformal-transformation} implies that if $\widehat{g} = e^{2\Upsilon}g$, then
\begin{equation*}
    \int_\Sigma u\widehat{L}u \, \widehat{\dvol} = \int_\Sigma e^{\frac{n-2}{2}\Upsilon}u L\left( e^{\frac{n-2}{2}\Upsilon}u \right) \dvol
\end{equation*}
for all $u \in C^\infty(\Sigma)$.
Second, by left-composing $j$ with an element of the conformal group $\Conf(S^N)$ if necessary, one can assume that the pullbacks $j^\ast x^i$ of first spherical harmonics have mean zero~\cite{LiYau1982}.
In the proof of Theorem~\ref{thm:michael-simon-sphere}, one uses the functions $j^\ast x^i$ as test functions in the second variation to show that the existence of a minimizer imposes strong conditions on the immersion $j$.
This idea is originally due to Hersch~\cite{Hersch1970}, with significant further development by Frank and Lieb~\cite{FrankLieb2012b};
an adaptation of an argument of Aubin~\cite{Aubin1976} allows one to construct a minimizer if the best constant is smaller than $A_n$.
In the proof of Theorem~\ref{thm:elsoufi-ilias}, one directly uses the functions $j^\ast x^i$ as test functions to estimate $\lambda_1$.

Theorem~\ref{thm:michael-simon-sphere} has a number of interesting applications.

First, it specializes to an elegant sharp Sobolev inequality on minimal submanifolds of simply-connected spaceforms.

\begin{corollary}
    \label{cor:minimal-michael-simon}
    Let $j \colon (\Sigma^n,g) \to (M_\kappa^N,h)$, $n \geq 3$, be a minimal isometric immersion into a simply-connected manifold with constant sectional curvature $\kappa$.
    Then
    \begin{equation*}
        \int_\Sigma \Bigl( \lvert \nabla u \rvert^2 + \frac{n(n-2)}{4}\kappa u^2 \Bigr) \dvol \geq A_n \left( \int_\Sigma \lvert u \rvert^{\frac{2n}{n-2}} \dvol \right)^{\frac{n-2}{n}}
    \end{equation*}
    for all $u \in W^{1,2}(\Sigma)$.
    Moreover, equality holds for some nonzero $u \in W^{1,2}(\Sigma)$ if and only if $(\Sigma^n,u^{\frac{4}{n-2}}g)$ is quasi-equal to $(S^n,d\theta^2)$ and $j$ is umbilic.
\end{corollary}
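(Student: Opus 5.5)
This is a direct specialization of Corollary~\ref{cor:michael-simon-spaceform}, so the plan is to substitute the minimality hypothesis $H \equiv 0$ into that result and read off both the inequality and the equality characterization.

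\emph{Step 1: the inequality.} Minimality gives $\lvert H \rvert^2 = 0$ identically on $\Sigma$. The potential term in Corollary~\ref{cor:michael-simon-spaceform} then reduces to
\begin{equation*}
\frac{n(n-2)}{4}\bigl( \lvert H \rvert^2 + \kappa \bigr) u^2 = \frac{n(n-2)}{4}\kappa u^2 .
\end{equation*}
This yields the stated inequality for all $u \in W^{1,2}(\Sigma)$.

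\emph{Step 2: the equality case.} The left-hand side in Corollary~\ref{cor:michael-simon-spaceform} coincides exactly with the left-hand side here, and the right-hand sides are identical. Hence equality holds for a nonzero $u$ in the present statement if and only if it holds in Corollary~\ref{cor:michael-simon-spaceform}. By that corollary, this happens precisely when $(\Sigma^n,u^{\frac{4}{n-2}}g)$ is quasi-equal to $(S^n,d\theta^2)$ and $j$ is umbilic. This is verbatim the claimed characterization.

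There is no substantive obstacle. For context only: a minimal umbilic immersion is totally geodesic, so the equality case forces $j(\Sigma)$ to lie in a totally geodesic $M_\kappa^n$. When $\kappa \le 0$, this is compatible with the quasi-equality condition only through conformal flatness of the image. None of this needs to be proved for the statement as written, since umbilicity is exactly the condition stated.

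Should one instead want to argue directly from Theorem~\ref{thm:michael-simon-sphere}, the route is as follows. Compose $j$ with a conformal immersion $\Phi \colon M_\kappa^N \to S^N$, which exists because $M_\kappa^N$ is conformally developable. Use Equation~\eqref{eqn:q-on-einstein} with $\Ric = (N-1)\kappa h$ to obtain $Q = \frac{n}{2}\kappa$, so that $\frac{n-2}{2}Q = \frac{n(n-2)}{4}\kappa$. Conformal invariance~\eqref{eqn:conformal-transformation} ensures that the quadratic form, and hence the inequality, is unchanged by passing to $S^N$. For the equality case, note that being totally geodesic in $S^N$ up to a conformal map is equivalent to umbilicity in $M_\kappa^N$, because umbilicity is conformally invariant.
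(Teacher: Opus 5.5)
Your proposal is correct and matches the paper's proof, which consists solely of applying Corollary~\ref{cor:michael-simon-spaceform} with $H=0$; your Steps~1 and~2 are exactly that substitution, for both the inequality and the equality characterization. Your alternative route through Theorem~\ref{thm:michael-simon-sphere} just reproduces the proof of Corollary~\ref{cor:michael-simon-spaceform} in the special case $H=0$.
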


Second, applying the sharp $L^2$-Michael--Simon inequality~\eqref{eqn:michael-simon-sphere} to the constant function recovers sharp Chen--Fenchel--Willmore-type inequalities~\cites{Chen1971,Fenchel1929,Willmore1968};
i.e.\ inequalities relating the $L^p$-norm, $p \in [2,n]$, of the mean curvature and the volume.
The results in the case $p=2$ are the strongest.

\begin{corollary}
    \label{cor:willmore}
    Let $(\Sigma^n,g)$, $n \geq 2$, be a compact Riemannian manifold and let $j \colon (\Sigma^n,g) \to (S^N,d\theta^2)$ be a conformal immersion.
    Then
    \begin{equation}
        \label{eqn:willmore}
        \frac{2}{n}\int_\Sigma Q \dvol \geq \Vol(S^n)^\frac{2}{n}\Vol(\Sigma)^{\frac{n-2}{n}} .
    \end{equation}
    Moreover,
    \begin{itemize}
        \item if $n=2$, then equality holds in Inequality~\eqref{eqn:willmore} if and only if $\Sigma^2$ is diffeomorphic to $S^2$ and $j$ is an umbilic embedding, and
        \item if $n\geq3$, then equality holds in Inequality~\eqref{eqn:willmore} if and only if $(\Sigma^n,g)$ is homothetic to $(S^n,d\theta^2)$ and $j$ is an umbilic isometric embedding.
    \end{itemize}
\end{corollary}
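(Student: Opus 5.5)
For $n\geq 3$ I will apply Theorem~\ref{thm:michael-simon-sphere} with $u\equiv 1$, and for $n=2$ I will run a Hersch-type argument modeled on Theorem~\ref{thm:elsoufi-ilias}.

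\emph{Case $n\geq 3$.} Since $\Sigma$ is compact, $1\in W^{1,2}(\Sigma)$. Inequality~\eqref{eqn:michael-simon-sphere} with $u\equiv 1$ reads
\begin{equation*}
\frac{n-2}{2}\int_\Sigma Q\dvol \geq \frac{n(n-2)}{4}\Vol(S^n)^{\frac{2}{n}}\Vol(\Sigma)^{\frac{n-2}{n}} .
\end{equation*}
Dividing by $\frac{n(n-2)}{4}$ gives Inequality~\eqref{eqn:willmore}.

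For the equality case I invoke the rigidity statement of Theorem~\ref{thm:michael-simon-sphere} with $u\equiv1$. It gives that $(\Sigma,g)$ is quasi-equal to $(S^n,d\theta^2)$. Since $\Sigma$ is compact, $\Sigma'\subset S^n$ is open and compact, hence all of $S^n$. Therefore $g$ is a metric of constant sectional curvature $1$ on a manifold diffeomorphic to $S^n$, so $(\Sigma,g)$ is isometric to the round sphere; with general scaling this is the ``homothetic'' conclusion. The theorem also gives that, after composing with some $\Phi\in\Conf(S^N)$, $j$ is a totally geodesic isometric embedding. Composing back with $\Phi^{-1}$ preserves umbilicity and embeddedness, so $j$ is an umbilic embedding. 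One point needs checking: $j$ is only conformal, so the phrase ``umbilic isometric embedding'' must be read with $g$ rescaled to match $j^\ast d\theta^2$; I would state it that way. For the converse, a round sphere umbilically embedded in $S^N$ is conformally equivalent to a totally geodesic $S^n$ with $g=j^\ast d\theta^2$. There $Q=\frac n2$ by~\eqref{eqn:q-on-einstein}, and both sides of~\eqref{eqn:willmore} equal $n^{-1}\cdot n\Vol(S^n)$. Conformal invariance of $\int u L u$, with $u$ the corresponding conformal factor, then gives equality in the original metric.

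\emph{Case $n=2$.} Here the right side of~\eqref{eqn:willmore} is $\Vol(S^2)=4\pi$, and $\int_\Sigma Q\dvol$ is conformally invariant by~\eqref{eqn:q-transformation}, so the claim is $\int_\Sigma Q\geq 4\pi$. I plan to follow Li--Yau~\cite{LiYau1982}. Replace $g$ by $j^\ast d\theta^2$ and compose with an element of $\Conf(S^N)$ so that the coordinate functions $x^i\circ j$ have mean zero. Then
\begin{equation*}
\sum_i\int \lv\nabla x^i\rv^2 = 2\Vol(\Sigma),
\end{equation*}
and since $Q=\lv H\rv^2+1\geq 1$ in this metric, I get $\int_\Sigma Q\geq \Vol(\Sigma)$ for every conformal image. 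Taking the supremum over $\Conf(S^N)$ gives the conformal volume bound, which is at least $4\pi$, and the Li--Yau argument yields exactly this.

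The \emph{main obstacle} is the $n=2$ equality case. Equality forces the conformal volume to be $4\pi$ with $\lv H\rv=0$ after the balancing transformation. I would then argue that the coordinate functions are first eigenfunctions with $\lambda_1=2$, so $j$ is totally geodesic and $\Sigma\cong S^2$; hence the original $j$ is an umbilic embedding. Alternatively I would cite the Li--Yau/Willmore rigidity, which gives that equality implies $\Sigma$ is embedded as a round sphere.
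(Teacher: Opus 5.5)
\paragraph{The case $n\geq 3$ and the inequality for $n=2$.}
Your argument for $n\geq 3$ is the paper's: take $u\equiv1$ in Theorem~\ref{thm:michael-simon-sphere} and read off the equality case from its rigidity statement. Your remarks that compactness forces $\Sigma'=S^n$, and that ``isometric'' must be read up to homothety and conformal change, are fine.

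For $n=2$ your inequality is also correct, but it takes a slightly different route. The paper composes with stereographic projection and uses~\eqref{eqn:q-transformation} to identify $\int_\Sigma Q\dvol$ with the Euclidean Willmore energy $\int_\Sigma\lv H\rv^2\dvol_{j^\ast dx^2}$. It then cites Li--Yau for both the inequality and its equality case. You instead bound $\int_\Sigma Q\dvol\geq\Vol(\Phi\circ j)$ for every $\Phi\in\Conf(S^N)$ and invoke Li--Yau's lower bound $4\pi$ for the conformal volume. That bound comes from blowing up at a point of the image. The balancing and the identity $\sum_i\int\lv\nabla x^i\rv^2=2\Vol(\Sigma)$ play no role in it.

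\paragraph{The gap: equality for $n=2$.}
The gap is in your primary argument for the equality case. From $\int_\Sigma Q\dvol=4\pi$ you only learn that $\Vol(\Phi\circ j)=4\pi-\int\lv H_\Phi\rv^2\leq 4\pi$ for every $\Phi$, with supremum $4\pi$. Concluding $H=0$ for the balanced representative requires that representative to attain the conformal volume, and balancing gives no such information. What is known goes the other way: minimal immersions realize the conformal volume. Until you already know the surface is a round sphere, nothing says a balanced immersion does.

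Even granting $H=0$, the next step fails. Takahashi's theorem gives $\Delta x^i=2x^i$, but nothing makes $2$ the first eigenvalue. Even when it is, total geodesy does not follow: the Clifford torus in $S^3$ has $\lambda_1=2$ with the coordinate functions as first eigenfunctions. What would actually finish from $H=0$ is that $\Vol(\Sigma)=\int_\Sigma Q\dvol=4\pi$, combined with Lemma~\ref{lem:minimal-volume} or the monotonicity formula.

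\paragraph{How to repair it.}
As written, the equality case rests entirely on your fallback citation of Willmore/Li--Yau rigidity, which is what the paper does. To make it precise, state the reduction explicitly. Stereographic projection from a point off $j(\Sigma)$, together with~\eqref{eqn:q-transformation}, turns $\int_\Sigma Q\dvol$ into the Willmore energy of a compact surface in $\bR^N$. The classical equality case (value $4\pi$ only for round spheres) then gives that $\Sigma\cong S^2$ and that $j$ is an umbilic embedding.
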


In particular, if $j \colon (\Sigma^n,g) \to (M_\kappa^N,h)$ is an isometric immersion of a compact manifold into a simply-connected spaceform with sectional curvature $\kappa$, then
\begin{equation}
    \label{eqn:willmore-spaceform}
    \int_\Sigma \left( \lv H \rv^2 + \kappa \right) \dvol \geq \Vol(S^n)^\frac{2}{n}\Vol(\Sigma)^{\frac{n-2}{n}}
\end{equation}
with equality if and only if $j$ is umbilic.

Corollary~\ref{cor:willmore} makes no assumptions on either the convexity or the codimension of $j(\Sigma)$, thereby significantly strengthening previous results.
For submanifolds of codimension at most four in Euclidean space and for submanifolds of codimension at most three in the round sphere, Corollary~\ref{cor:willmore} follows from Han and Xu's work~\cite{HanXu2026}.
The best previous result in hyperbolic space, due to Hu~\cite{Hu2018}, used geometric flow methods (cf.\ \cites{LiWeiXiong2014,GuanLi2009}) to prove Inequality~\eqref{eqn:willmore-spaceform} for star-shaped mean convex hypersurfaces.
Corollary~\ref{cor:willmore} also has an interesting interpretation in the context of Santal\'o's conjecture~\cite{Santalo1963}, which posited that
\begin{equation*}
    \int_\Sigma H \, \dvol \geq \sqrt{\Vol(S^2)\Vol(\Sigma) + \Vol(\Sigma)^2}
\end{equation*}
on any compact strictly convex hypersurface in hyperbolic $3$-space.
This would imply Corollary~\ref{cor:willmore}.
However, Santal\'o's conjecture is false~\cites{Natario2015,Hong2026}, making Inequality~\eqref{eqn:willmore-spaceform} all the more remarkable.

While weaker than $L^2$-inequalities, $L^n$-inequalities on the mean curvature have interest by virtue of their scale invariance.

\begin{corollary}
    \label{cor:chen-willmore}
    Let $(\Sigma^n,g)$, $n \geq 2$, be compact and let $j \colon (\Sigma^n,g) \to (S^N,d\theta^2)$ be a conformal immersion.
    Then
    \begin{equation*}
        \int_\Sigma Q_+^{n/2} \dvol \geq \left( \frac{n}{2} \right)^{n/2} \Vol(S^n) .
    \end{equation*}
    Moreover,
    \begin{itemize}
        \item if $n=2$, then equality holds in Inequality~\eqref{eqn:willmore} if and only if $\Sigma^2$ is diffeomorphic to $S^2$ and $j$ is an umbilic embedding, and
        \item if $n\geq3$, then equality holds in Inequality~\eqref{eqn:willmore} if and only if $(\Sigma^n,g)$ is homothetic to $(S^n,d\theta^2)$ and $j$ is an umbilic isometric embedding.
    \end{itemize}
\end{corollary}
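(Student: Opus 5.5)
The inequality follows from Corollary~\ref{cor:willmore} and H\"older's inequality; the case $n=2$ needs separate handling, since $Q$ then enters only through Theorem~\ref{thm:elsoufi-ilias} and the conformal invariance of $\int Q$.

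\emph{Case $n\geq3$.} By H\"older with exponents $\frac{n}{2}$ and $\frac{n}{n-2}$,
\begin{equation*}
    \int_\Sigma Q \dvol \leq \int_\Sigma Q_+ \dvol \leq \left( \int_\Sigma Q_+^{n/2} \dvol \right)^{2/n} \Vol(\Sigma)^{\frac{n-2}{n}} .
\end{equation*}
Combining this with Inequality~\eqref{eqn:willmore}, i.e.\ $\int_\Sigma Q \dvol \geq \frac{n}{2}\Vol(S^n)^{2/n}\Vol(\Sigma)^{\frac{n-2}{n}}$, and dividing by $\Vol(\Sigma)^{\frac{n-2}{n}}>0$ gives
\begin{equation*}
    \left(\int_\Sigma Q_+^{n/2} \dvol\right)^{2/n} \geq \frac{n}{2}\Vol(S^n)^{2/n} ,
\end{equation*}
which is the claim.

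Suppose equality holds. Then every inequality above is an equality. Equality in Corollary~\ref{cor:willmore} yields the stated rigidity: $(\Sigma,g)$ is homothetic to $(S^n,d\theta^2)$ and $j$ is an umbilic isometric embedding. Conversely, for such $j$ the function $Q$ is a positive constant. Equality in \eqref{eqn:willmore} is then attained, and H\"older is sharp because $Q_+$ is constant. Equality in $\int Q \leq \int Q_+$ also holds, since $Q>0$.

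\emph{Case $n=2$.} Here $Q_+^{n/2}=Q_+$, so the claim reads $\int_\Sigma Q_+ \dvol \geq \Vol(S^2)=4\pi$. Corollary~\ref{cor:willmore} with $n=2$ gives exactly $\int_\Sigma Q \dvol \geq 4\pi$, and $\int Q_+ \geq \int Q$, so the inequality follows.

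Suppose equality holds. Then $\int Q = \int Q_+$, which forces $Q\geq0$ almost everywhere, and equality holds in \eqref{eqn:willmore}. The $n=2$ rigidity clause of Corollary~\ref{cor:willmore} then gives that $\Sigma\cong S^2$ and $j$ is an umbilic embedding. Conversely, for an umbilic embedding of $S^2$, pick a conformal factor making it a round totally geodesic $S^2\subset S^N$. In that gauge $Q=1$, so $Q_+=Q$ and $\int Q_+=\int Q=4\pi$. Since $\int Q$ is conformally invariant, $\int Q = 4\pi$ in the original metric as well.

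\emph{Main obstacle.} The step to check carefully is that, when $n=2$, the quantity $\int Q_+$ is \emph{not} conformally invariant. The converse direction must therefore be verified in the given metric $g$, not just in a round gauge. I would handle this by noting that an umbilic immersion into $S^N$ makes the trace-free second fundamental form vanish. Then $Q$ equals the Gauss-curvature-type quantity $\frac{n}{2}(\lvert H\rvert^2+1)$ in the round ambient gauge, which is $\geq \frac{n}{2}>0$, so $Q_+=Q$ there. If the "equality iff" is meant for the given metric, I would instead interpret $g$ as the metric induced on $j(\Sigma)$ by $d\theta^2$, which is what makes the equality characterization meaningful.
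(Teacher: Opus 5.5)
Your proposal is correct and is essentially the paper's proof: for $n\geq3$ you combine Corollary~\ref{cor:willmore} with H\"older's inequality, and for $n=2$ you use $\int_\Sigma Q_+\dvol\geq\int_\Sigma Q\dvol\geq4\pi$. Your caveat about the converse when $n=2$ is well founded: since $e^{2\Upsilon}\widehat{Q}=Q+\Delta\Upsilon$, a non-induced conformal metric on an umbilic sphere can make $Q$ negative somewhere, giving $\int_\Sigma Q_+\dvol>4\pi$. The paper's one-line treatment of $n=2$ passes over this point, and restricting to metrics with $Q\geq0$, e.g.\ $g=j^\ast d\theta^2$ where $Q=\lvert H\rvert^2+1$, is the right fix.
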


Here $Q_+$ denotes the positive part of $Q$.
Corollary~\ref{cor:chen-willmore} implies that if $\Sigma$ is compact and $j \colon (\Sigma^n,g) \to (M_\kappa^N,h)$ is an isometric immersion into a simply-connected spaceform with sectional curvature $\kappa$, then
\begin{equation*}
    \int_\Sigma \bigl( \lvert H \rvert^2 + \kappa \bigr)_+^{n/2} \dvol \geq \Vol(S^n)
\end{equation*}
with equality if and only if $j$ is umbilic.

Corollary~\ref{cor:chen-willmore} is already known for isometric immersions into Euclidean space, and hence also isometric immersions into the round sphere.
The first proof, due to Chen~\cite{Chen1971}, uses properties of the Gauss map~\cite{ChernLashof1957};
see also the work of Heintze and Karcher~\cite{HeintzeKarcher1978}.
PDE methods were used to give alternative proofs for hypersurfaces using a careful study of electrostatic potentials~\cites{AgostinianiMazzieri2020,CederbaumMiehe2025}.
The other cases of Corollary~\ref{cor:chen-willmore} appear new.

We conclude with an additional sense in which the $L^2$-Michael--Simon inequality~\eqref{eqn:michael-simon-sphere} is sharp.
Unlike on the sphere~\cite{Branson1995}, the space of natural, formally self-adjoint, conformally invariant, linear differential operators with leading-order term $\Delta$ on a submanifold is not one-dimensional;
one can add a nonzero multiple of $\lvert \tfsff \rvert^2$ to $L$ without affecting these properties.
Theorem~\ref{thm:michael-simon-sphere} is optimal among all conformally invariant inequalities.

\begin{theorem}
    \label{thm:michael-simon-super-sharp}
    Let $\varepsilon \in \mathbb{R}$ be such that if $j \colon (\Sigma^n,g) \to (S^N,d\theta^2)$, $n \geq 3$, is a conformal immersion, then
    \begin{equation}
        \label{eqn:michael-simon-epsilon}
        \int_\Sigma \left( uLu + \varepsilon\lvert \tfsff \rvert^2 u^2 \right) \dvol \geq A_n \left( \int_\Sigma \lvert u \rvert^{\frac{2n}{n-2}} \dvol \right)^{\frac{n-2}{n}}
    \end{equation}
    for all $u \in \CC^\infty(\Sigma)$.
    Then $\varepsilon \geq 0$.
\end{theorem}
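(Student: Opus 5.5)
The plan is to produce conformal immersions and test functions for which the deficit
\[
\int_\Sigma uLu\,\dvol - A_n\Bigl(\int_\Sigma \lv u\rv^{\frac{2n}{n-2}}\dvol\Bigr)^{\frac{n-2}{n}}
\]
is arbitrarily small compared with $\int_\Sigma \lv\tfsff\rv^2u^2\dvol$. If $\varepsilon<0$, Inequality~\eqref{eqn:michael-simon-epsilon} then fails for such a pair, which proves $\varepsilon\geq0$.

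Since both sides of~\eqref{eqn:michael-simon-epsilon} are conformally invariant, and $\lv\tfsff\rv^2\dvol$ transforms with the same weight as $uLu\,\dvol$, I will work in Euclidean space via stereographic projection. There $L=\Delta+\frac{n(n-2)}{4}\lv H\rv^2$, as in Theorem~\ref{thm:sharp-michael-simon}.

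\textbf{Construction.} Fix a nonnegative $u\in\CC^\infty(\bR^n)$ that is very close to an extremal bubble $(1+\lv x\rv^2)^{-\frac{n-2}{2}}$, so that $\int\lv\nabla u\rv^2 \leq A_n\lV u\rV_{2n/(n-2)}^2+\delta$ on the flat plane. Fix a cutoff $\chi\in\CC^\infty(B_1)$ with $u\geq c>0$ on $\supp\chi$. For $k\gg1$ set $\varphi_k:=\chi\,\mathrm{Re}\,e^{k(x_1+ix_2)}e^{-k}$, suitably normalized; its leading part is harmonic. Let $\Sigma_t$ be the graph of $t\varphi_k$ over $\bR^n\subset\bR^{n+1}$, and use $u$, pulled back by the graph map, as the test function.

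\textbf{Expansion.} I will expand in $t$ with $k$ large and $tk^2\ll1$, and all estimates below are to be checked under this regime.
\begin{itemize}
    \item The induced metric is $g_t=dx^2+t^2\,d\varphi_k\otimes d\varphi_k$. Hence the Dirichlet energy, the volume form and the $L^{2n/(n-2)}$-norm all change by $O(t^2\lV\nabla\varphi_k\rV_\infty^2)=O(t^2k^2)$.
    \item The mean curvature is $H_t=\frac{t}{n}\Delta\varphi_k+O(t^3k^4)$. Because the leading part of $\varphi_k$ is harmonic, $\Delta\varphi_k=2\nabla\chi\cdot\nabla h+h\Delta\chi$ with $h$ the harmonic factor, so $\Delta\varphi_k=O(k)$. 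Consequently $\int\lv H_t\rv^2u^2=O(t^2k^2)$.
    \item The trace-free second fundamental form is $\tfsff_t=t\,\mathring{\nabla}^2\varphi_k+\dotsb$. On the set where $\chi=1$, the Hessian of $\mathrm{Re}\,e^{k(x_1+ix_2)}$ is trace free and has size $\sim k^2\lv h\rv$. Hence $\int\lv\tfsff_t\rv^2u^2\gtrsim t^2k^4$.
\end{itemize}
Altogether the deficit is at most $\delta+O(t^2k^2)$, while the $\tfsff$-term is at least $c\,t^2k^4$.

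\textbf{Conclusion.} First choose $k$ large and then $t\ll k^{-2}$. The left side of~\eqref{eqn:michael-simon-epsilon} minus the right side is then at most
\[
\delta+Ct^2k^2+\varepsilon c\,t^2k^4 .
\]
If $\varepsilon<0$ this is negative, provided $\delta$ is taken of order $o(t^2k^4)$. This is possible because $u$ can be approximated to the extremal bubble independently of $(t,k)$; the approximation error is truly independent of the immersion, since the immersion is only perturbed inside $B_1$.

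\textbf{Main obstacle.} The hard part is the bookkeeping of the second-order expansion. I have to confirm three things:
\begin{itemize}
    \item all intrinsic error terms, from the metric, the volume and the Sobolev norm, are genuinely $O(t^2k^2)$, with no hidden $k^4$ contribution;
    \item the higher-order terms in $H_t$ are controlled when $tk^2\ll1$;
    \item the exponential growth $e^{kx_1}$ of $h$ is harmless after the normalization $e^{-k}$, so that all terms scale by the same weight.
\end{itemize}
To settle the third point I would first try replacing the growing harmonic function $h$ by the bounded one $\cos(kx_1)\cosh(kx_2)$ restricted to a slab of width $k^{-1}$, rescaled. If the exponential weights still cause trouble, an alternative is to localize $\chi$ to a ball of radius $r$ and take $kr\to\infty$.

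It is also worth noting why oscillation is essential. For the natural non-oscillating examples (thin product tori, minimal spheres, normal graphs with $u$ constant), the Gauss equation rewrites $\frac{n-2}{2}Q$ as the Yamabe potential $\frac{n-2}{4(n-1)}R$ plus $\frac{n-2}{4(n-1)}\lv\tfsff\rv^2$. Those examples therefore only force $\varepsilon\geq-\frac{n-2}{4(n-1)}$. The near-harmonic, high-frequency perturbation is what makes $\lv\tfsff\rv^2$ dominate both $\lv H\rv^2$ and the intrinsic curvature.
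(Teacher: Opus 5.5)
\textbf{There is a genuine gap, and it is not just bookkeeping: the estimates you list cannot all hold at once.} For any $\varphi\in\CC^\infty(B_1)$ and any smooth weight $w$ (here $w=u^2$), integrating by parts twice gives
\[
\int\lv\nabla^2\varphi\rv^2w\,dx=\int(\Delta\varphi)^2w\,dx+O\bigl(\lV\Delta\varphi\rV_{2}\lV\nabla\varphi\rV_{2}+\lV\nabla\varphi\rV_{2}^2\bigr),
\]
with a constant depending only on $\lV w\rV_{C^2}$.

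So if $\nabla\varphi_k=O(k)$ and $\Delta\varphi_k=O(k)$ on $B_1$, then $\int\lv\nabla^2\varphi_k\rv^2u^2=O(k^2)$. Your lower bound $\int\lv\tfsff_t\rv^2u^2\gtrsim t^2k^4$ is therefore false. In your example you took that lower bound on $\{\chi=1\}\subset B_{1-\rho}$, but there $\lv\nabla^2h\rv\lesssim k^2e^{-\rho k}$ after the normalization $e^{-k}$. The Hessian mass of $\chi h$ actually sits near the edge of $\supp\chi$ where $x_1$ is maximal. There $\lv\Delta(\chi h)\rv\sim k\lv\nabla\chi\rv\lv h\rv$ carries comparable $L^2$-mass. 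For harmonic $h$, $\int_\Omega\lv\nabla^2h\rv^2$ is a boundary flux, so no choice of cutoff (slabs, balls with $kr\to\infty$) avoids this.

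More fundamentally, no $C^2$-small compactly supported perturbation of the plane, with test function near the bubble, can rule out $\varepsilon\in\bigl(-\frac{n-2}{4(n-1)},0\bigr)$.

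\begin{itemize}
\item By conformal invariance, your configuration is a normal graph of $t\psi$ over the totally geodesic $S^n\subset S^{n+1}$, with test function $1+O(t)$.
\item Since $S^n$ is totally geodesic, the metric, the volume form and $\lv H\rv^2$ change only at order $t^2$. Hence the second variation splits into two pieces: the nonnegative second variation of the round Sobolev quotient in $u$, and a $\psi$-part.
\item For the $\psi$-part, use the second variation of volume $\int(\lv\nabla\psi\rv^2-n\psi^2)$, together with $\lv H_t\rv^2=\frac{t^2}{n^2}(\Delta\psi-n\psi)^2+O(t^3)$, $\lv\tfsff_t\rv^2=t^2\lv\mathring{\nabla}^2\psi\rv^2+O(t^3)$, and Bochner's formula on $S^n$. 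An eigenfunction $\Delta\psi=\lambda\psi$ then contributes $\frac{n-2}{4n}\lambda(\lambda-n)t^2\lV\psi\rV_2^2$ to the $\varepsilon=0$ deficit, and $\frac{n-1}{n}\lambda(\lambda-n)t^2\lV\psi\rV_2^2$ to $\int\lv\tfsff_t\rv^2$.
\end{itemize}

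Thus, to second order, the deficit of \eqref{eqn:michael-simon-epsilon} is at least
\[
\Bigl(\frac{n-2}{4(n-1)}+\varepsilon\Bigr)t^2\int_{S^n}\lv\mathring{\nabla}^2\psi\rv^2\dvol .
\]
This is exactly the threshold you attributed to non-oscillating examples. Oscillation does not help: the negative intrinsic curvature $R\approx t^2\bigl((\Delta\varphi)^2-\lv\nabla^2\varphi\rv^2\bigr)$ of a small graph averages out by the same integration by parts.

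\textbf{The missing idea is a non-perturbative example on which $L_\varepsilon$ itself fails to be nonnegative.} The paper uses the minimal hypersurface $C^2\times\bR^{n-2}\subset\bR^{n+1}$. There $H=0$ and $\lv\tfsff\rv^2=2\sech^4s$. Because the catenoid is conformal to a flat cylinder,
\[
\int uL_\varepsilon u\,\dvol=\int\bigl(u_\theta^2+u_s^2+u_y^2\cosh^2s+2\varepsilon u^2\sech^2s\bigr)\,dy\,ds\,d\theta .
\]
Take cutoffs of length $T$ in $s$, and of radius $R$ in $y$ with $R$ exponentially large in $T$. The gradient terms are then $o(R^{n-2})$, while $\int u^2\sech^2s\gtrsim R^{n-2}$. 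Hence the left side of \eqref{eqn:michael-simon-epsilon} is negative for every $\varepsilon<0$.
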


In fact, we show in Section~\ref{sec:sharpness} that if $\varepsilon < 0$, then $L + \varepsilon \lv \tfsff \rv^2 \not\geq 0$ on the product $C^2 \times \bR^{n-2} \subset \bR^{n+1}$ of a catenoid with Euclidean space.

This paper is organized as follows.

In Section~\ref{sec:setup}, we introduce the extrinsic Yamabe constant, which is a conformal invariant that computes the best constant in the $L^2$-Michael--Simon inequality.
We also prove that the extrinsic Yamabe constant is at most $A_n$, and that if the inequality is strict, then a minimizer exists.

In Section~\ref{sec:rigidity}, we carry out the extrinsic version of the Frank--Lieb argument for the extrinsic Yamabe constant.
We then apply this to prove Theorem~\ref{thm:michael-simon-sphere}.

In Section~\ref{sec:applications}, we explain how Theorem~\ref{thm:sharp-michael-simon} and Corollaries~\ref{cor:michael-simon-spaceform}, \ref{cor:minimal-michael-simon}, \ref{cor:willmore}, and~\ref{cor:chen-willmore} follow from Theorem~\ref{thm:michael-simon-sphere}.
We also streamline the proof of Theorem~\ref{thm:elsoufi-ilias} using the extrinsic conformal Laplacian.

In Section~\ref{sec:sharpness}, we prove Theorem~\ref{thm:michael-simon-super-sharp}.

\section{The extrinsic conformal Laplacian}
\label{sec:setup}

In this section we study the extrinsic conformal Laplacian and its associated Yamabe-type problem.
Our treatment takes elements from both the standard approach to the Yamabe Problem~\cite{LeeParker1987} and to the proof of existence via Lions' Concentration Compactness Principle~\cites{Lions1985a}.

Case, Graham, and Kuo~\cite{CaseGrahamKuo2023}*{Theorems~1.1 and~5.2} proved the conformal invariance~\eqref{eqn:conformal-transformation} of the extrinsic conformal Laplacian $L$ by exhibiting it as a member of a family of conformally invariant operators.
A more direct proof, which applies only for the second-order operator $L$, uses the Gauss equations~\cite{ElSoufiIlias1992}*{Proposition~2}.
We recall this argument in order to clarify the relationship between $L$, the usual (intrinsic) conformal Laplacian
\begin{equation*}
    L_{\mathrm{int}} := \Delta + \frac{n-2}{4(n-1)}R ,
\end{equation*}
where $R$ is the scalar curvature, and the extrinsically-coupled conformal Laplacian used by El Soufi and Ilias~\cite{ElSoufiIlias1992}.

\begin{lemma}
    \label{lem:conformal-invariance}
    Let $j \colon (\Sigma^n,g) \to (M^N,h)$, $n \geq 1$ and $N \geq 3$, be a conformal immersion.
    Set $\widehat{g} = e^{2\Upsilon}g$.
    Then
    \begin{equation*}
        L^{\widehat{g}}u = e^{-\frac{n+2}{2}\Upsilon}L^g\bigl( e^{\frac{n-2}{2}\Upsilon}u \bigr)
    \end{equation*}
    for all $u \in C^\infty(\Sigma)$.
\end{lemma}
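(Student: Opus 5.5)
Because $L$ and $Q$ are local and defined through any compatible ambient metric, I will first reduce to the case where $j$ is isometric for $g$. Writing $j^\ast h = e^{2\omega}g$ locally, I replace $h$ by $e^{-2\tilde\omega}h$ for a local extension $\tilde\omega$ of $\omega$ to a neighborhood of $j(U)$. This uses the fact that $L$ is defined for conformal immersions via the transformation law, so the statement for general conformal immersions follows from the isometric case together with the cocycle property of the law. It then suffices to treat $\widehat h = e^{2\Upsilon}h$ with $\Upsilon$ now a function on $M$, $j$ isometric for both $(g,h)$ and $(\widehat g,\widehat h)$, and $\widehat g = e^{2j^\ast\Upsilon}g$. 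I will write $\Upsilon$ also for $j^\ast\Upsilon$.

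The key step is to express $Q$ via the Gauss equation as the intrinsic scalar curvature term plus conformally covariant pieces, following El Soufi--Ilias. Tracing the Gauss equation twice gives
\[
R = \sum_{\alpha,\beta} \mathrm{Rm}^h(\partial_\alpha,\partial_\beta,\partial_\beta,\partial_\alpha) + n^2\lv H\rv^2 - \lv \sff\rv^2 .
\]
I then decompose the ambient curvature into the Weyl tensor and the Schouten tensor: $\mathrm{Rm} = W + P \owedge h$. The tangential double trace is therefore $2(n-1)\tr_g j^\ast P + \tr\tr W$. Using $\lv\sff\rv^2 = \lv\tfsff\rv^2 + n\lv H\rv^2$, this yields the identity
\[
Q = \frac{n}{2(n-1)}\cdot\frac{R}{n}\cdot\frac{n-1}{1}\cdot\frac{1}{n-1}\ldots
\]
Concretely, I expect
\[
\frac{n-2}{2}Q = \frac{n-2}{4(n-1)}R + \frac{n-2}{4(n-1)}\bigl(\lv\tfsff\rv^2 - W_{\alpha\beta\beta\alpha}\bigr).
\]
I will verify the coefficient by checking that the $\lv H\rv^2$ terms cancel: $\tfrac{n}{2}-\tfrac{n^2-n}{2(n-1)}=0$.

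Hence $L = L_{\mathrm{int}} + \frac{n-2}{4(n-1)}\bigl(\lv\tfsff\rv^2 - \sum W(\partial_\alpha,\partial_\beta,\partial_\beta,\partial_\alpha)\bigr)$. The classical conformal covariance of $L_{\mathrm{int}}$ gives the desired law for the first term. For the potential term $F$ it suffices to show $\widehat F = e^{-2\Upsilon}F$, since then
\[
\widehat F u = e^{-\frac{n+2}{2}\Upsilon}F\bigl(e^{\frac{n-2}{2}\Upsilon}u\bigr).
\]
This weight holds for two reasons. First, $\tfsff$ is conformally invariant as a normal-bundle-valued $(0,2)$-tensor, with $\widehat{\tfsff} = e^{\Upsilon}\tfsff$ in the ambient metric, so that $\lv\widehat\tfsff\rv^2_{\widehat g,\widehat h} = e^{-2\Upsilon}\lv\tfsff\rv^2$. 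Second, the $(0,4)$ Weyl tensor satisfies $\widehat W = e^{2\Upsilon}W$, so its double trace with respect to $\widehat g$ scales by $e^{-2\Upsilon}$.

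The main obstacle is that the intrinsic conformal Laplacian requires $n\ge3$ for the factor $\frac{n-2}{4(n-1)}$, while the lemma allows $n\ge1$. For $n=2$, both sides have coefficient $0$ in front of the potential and the claim reduces to conformal invariance of $\Delta$ in dimension two, which is immediate. For $n=1$, I will instead verify directly that $Q$ transforms correctly: $\widehat{Q} = e^{-2\Upsilon}(Q + \Upsilon'' \text{ terms})$. To do this I use the transformation laws $\widehat P = P - \nabla^2\Upsilon + d\Upsilon\otimes d\Upsilon - \frac12\lv d\Upsilon\rv^2 h$ and $\widehat H = e^{-2\Upsilon}(H + (\nabla\Upsilon)^\perp)$. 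I then check that the normal gradient terms cancel against the normal part of the Hessian in $\tr_g j^\ast\widehat P$. This direct computation also serves as a uniform alternative proof valid for all $n$, and I would present it as the fallback if the Gauss-equation bookkeeping becomes awkward.
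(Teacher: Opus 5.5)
Your proposal is correct and is essentially the paper's proof. You reduce locally to an isometric embedding with $\Upsilon$ defined on $M$, use the twice-traced Gauss equation and the Weyl--Schouten decomposition of the ambient curvature to write $L = L_{\mathrm{int}} + \frac{n-2}{4(n-1)}\bigl(\lv\tfsff\rv^2 - \tr^2 j^\ast W\bigr)$, and conclude from the conformal covariance of $L_{\mathrm{int}}$ and the conformal weight $-2$ of the potential. Your separate treatment of $n=1$ is a valid supplement that the paper lacks: its identity $Q = \frac{1}{2(n-1)}\bigl(R + \lv\tfsff\rv^2 - \tr^2 j^\ast W\bigr)$ is meaningless at $n=1$, although the lemma is stated for $n \geq 1$. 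Your direct computation from the Schouten and mean-curvature transformation laws gives $\widehat{Q} = e^{-2\Upsilon}\bigl(Q + \Delta\Upsilon - \frac{n-2}{2}\lv\nabla\Upsilon\rv^2\bigr)$ for every $n$, and combined with $\widehat{\Delta}u = e^{-2\Upsilon}\bigl(\Delta u - (n-2)\lp\nabla\Upsilon,\nabla u\rp\bigr)$ this yields the lemma. So your ``$\Upsilon''$ terms'' must include the gradient-squared term, and the garbled intermediate display for $Q$ should simply be deleted.
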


\begin{proof}
    Since $L$ is a local operator and $g = e^{2u}j^\ast h$ for some $u \in C^\infty(\Sigma)$, we may assume that $j$ is an isometric embedding, and hence that $\Upsilon$ is the pullback of a smooth function on $M$.
    
    The Gauss equation implies that
    \begin{equation*}
        Q = \frac{1}{2(n-1)}\left( R + \lv \tfsff \rv^2 - \tr^2 j^\ast W \right) ,
    \end{equation*}
    where
    \begin{equation*}
        \tr^2 j^\ast W := \sum_{\alpha,\beta=1}^n W(e_\alpha,e_\beta,e_\alpha,e_\beta)
    \end{equation*}
    is the trace of the pullback of the Weyl tensor of $h$;
    this follows, for example, by taking the trace of the equation~\cite{CaseGrahamKuoTyrrellWaldron2023a}*{Equation~(4.9b)} relating the intrinsic Schouten tensor $P^{g}$ to the pullback $j^\ast P^h$ of the Schouten tensor of $h$.
    Therefore
    \begin{equation*}
        L = L_{\mathrm{int}} + \frac{n-2}{4(n-1)}\left( \lv \tfsff \rv^2 - \tr^2 j^\ast W \right) .
    \end{equation*}
    The conclusion follows from the conformal invariance of the conformal Laplacian~\cite{LeeParker1987} and the conformal invariance of the trace-free second fundamental form $\tfsff$ and the Weyl tensor $W$.
\end{proof}

Four key properties of the extrinsic conformal Laplacian are that it is natural, conformally invariant, formally self-adjoint, and has leading-order term $\Delta$.
The last three properties are equivalently formulated using the associated quadratic form
\begin{equation*}
    \mQ(u,v) := \int_\Sigma uLv \dvol ,
\end{equation*}
defined on $\CC^\infty(\Sigma) \times \CC^\infty(\Sigma)$.

\begin{lemma}
    \label{lem:energy-invariance}
    Let $j \colon (\Sigma^n,g) \to (M^N,h)$, $n \geq 1$ and $N \geq 3$, be a conformal immersion.
    Then
    \begin{equation}
        \label{eqn:energy-integral}
        \mQ(u,v) = \int_\Sigma \Bigl( \lp \nabla u , \nabla v \rp + \frac{n-2}{2}Quv \Bigr) \dvol
    \end{equation}
    for all $u,v \in \CC^\infty(\Sigma)$.
    In particular, $\mQ$ is symmetric on $\CC^\infty(\Sigma) \times \CC^\infty(\Sigma)$.
    Moreover, if $\Upsilon \in C^\infty(\Sigma)$, then
    \begin{equation}
        \label{eqn:conformally-covariant}
        \mQ^{e^{2\Upsilon}g}(u,v) = \mQ^g\bigl( e^{\frac{n-2}{2}\Upsilon}u , e^{\frac{n-2}{2}\Upsilon}v \bigr)
    \end{equation}
    for all $u,v \in \CC^\infty(\Sigma)$.
\end{lemma}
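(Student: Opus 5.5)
The plan is to derive both claims directly from the definition $L = \Delta + \frac{n-2}{2}Q$ in Equation~\eqref{eqn:defn-extrinsic-conformal-laplacian} and from Lemma~\ref{lem:conformal-invariance}.

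For the integral formula~\eqref{eqn:energy-integral}, I would take $u,v \in \CC^\infty(\Sigma)$ and write $\mQ(u,v) = \int_\Sigma u\Delta v \dvol + \frac{n-2}{2}\int_\Sigma Quv \dvol$. Recall that $\Delta = -\nabla^\alpha\nabla_\alpha$. The divergence theorem then gives $\int_\Sigma u\Delta v \dvol = \int_\Sigma \lp \nabla u,\nabla v\rp \dvol$. There are no boundary terms because $u$ has compact support, and completeness of $\Sigma$ is not needed for the same reason. The right-hand side of~\eqref{eqn:energy-integral} is manifestly symmetric in $u$ and $v$, so symmetry of $\mQ$ follows at once.

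For the conformal covariance~\eqref{eqn:conformally-covariant}, set $\hg = e^{2\Upsilon}g$. The relevant facts are $\widehat{\dvol} = e^{n\Upsilon}\dvol$ and, from Lemma~\ref{lem:conformal-invariance}, $L^{\hg}v = e^{-\frac{n+2}{2}\Upsilon}L^g\bigl(e^{\frac{n-2}{2}\Upsilon}v\bigr)$. Multiplying by $u$ and integrating, the weight is
\begin{equation*}
    e^{n\Upsilon}e^{-\frac{n+2}{2}\Upsilon} = e^{\frac{n-2}{2}\Upsilon} ,
\end{equation*}
and this factor combines with $u$. Hence
\begin{equation*}
    \mQ^{\hg}(u,v) = \int_\Sigma e^{\frac{n-2}{2}\Upsilon}u \, L^g\bigl(e^{\frac{n-2}{2}\Upsilon}v\bigr) \dvol = \mQ^g\bigl(e^{\frac{n-2}{2}\Upsilon}u, e^{\frac{n-2}{2}\Upsilon}v\bigr) .
\end{equation*}
Since $\Upsilon$ is smooth, multiplying by $e^{\frac{n-2}{2}\Upsilon}$ preserves $\CC^\infty(\Sigma)$, so all quantities remain well defined.

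There is no real obstacle. The only point needing care is that $j$ is merely a conformal immersion, so $L$ and $Q$ have to be understood as in Lemma~\ref{lem:conformal-invariance}, where they are defined via a local representative metric. Both identities above are pointwise or integrate compactly supported quantities, so that setting is enough.
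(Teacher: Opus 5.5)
Your proposal is correct and follows the paper's proof: you obtain the integral formula by integrating the compactly supported term $u\Delta v$ by parts, and you get the conformal covariance by combining $\widehat{\dvol} = e^{n\Upsilon}\dvol$ with the transformation law of Lemma~\ref{lem:conformal-invariance}. The exponent bookkeeping $n - \frac{n+2}{2} = \frac{n-2}{2}$ checks out.
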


\begin{proof}
    Direct computation yields Equation~\eqref{eqn:energy-integral}, which is evidently symmetric.

    Set $\hg := e^{2\Upsilon}g$.
    Lemma~\ref{lem:conformal-invariance} implies that
    \begin{equation*}
        \mQ^{\hg}(u,v) = \int_\Sigma e^{\frac{n-2}{2}\Upsilon}u L^g\bigl( e^{\frac{n-2}{2}\Upsilon}v \bigr) \dvol_g = \mQ^g\bigl( e^{\frac{n-2}{2}\Upsilon}u , e^{\frac{n-2}{2}\Upsilon}v \bigr) . \qedhere
    \end{equation*}
\end{proof}

We now introduce the extrinsic Yamabe problem.
Because the $L^2$-Michael--Simon inequality applies to noncompact submanifolds, and because functional inequalities on noncompact manifolds are propitiously studied using compact exhaustions, we find it advantageous to define the extrinsic Yamabe constant on (possibly noncompact) Riemannian manifolds-with-boundary.

Let $(\Sigma^n,g)$, $n \geq 3$, be a Riemannian manifold-with-boundary.
The \defn{extrinsic Yamabe energy} $\mE$ of a conformal immersion $j \colon (\Sigma^n,g) \to (M^N,h)$ is defined on $\CC^\infty(\Sigma \setminus \partial\Sigma)$ by
\begin{equation*}
    \mE(u) := \mQ(u,u) .
\end{equation*}
Lemma~\ref{lem:energy-invariance} implies that the extrinsic Yamabe energy is conformally invariant and
\begin{equation*}
    \frac{1}{2} \left. \frac{d}{dt} \right|_{t=0} \mE(u + tv) = \mQ(u,v)
\end{equation*}
for any $u,v \in \CC^\infty(\Sigma \setminus \partial\Sigma)$.
The \defn{extrinsic Yamabe constant} of $j$ is
\begin{equation}
    \label{eqn:defn-extrinsic-yamabe-constant}
    \Yext(j) := \inf \left\{ \mE(u) \suchthat u \in \CC^\infty( \Sigma \setminus \partial\Sigma ) , \int_\Sigma \lv u \rv^{\frac{2n}{n-2}} \dvol = 1 \right\} .
\end{equation}
Equivalently,
\begin{align*}
    \Yext(j) & = \inf \left\{ \mF(u) \suchthat u \in \CC^\infty(\Sigma \setminus \partial\Sigma) \right\} , \\
    \mF(u) & := \mE(u) \left( \int_\Sigma \lv u \rv^{\frac{2n}{n-2}} \dvol \right)^{-\frac{n-2}{n}} .
\end{align*}
That the extrinsic Yamabe constant is well-defined and natural follows from the conformal invariance and naturality of the extrinsic Yamabe energy.

\begin{lemma}
    \label{lem:yamabe-constant-conformal-invariant}
    Let $(\Sigma^n,g)$, $n \geq 3$, be a Riemannian manifold-with-boundary and let $j \colon (\Sigma^n,g) \to (M^N,h)$ be a conformal immersion.
    Then $\Yext(j)$ is well-defined.
    Moreover, if $\Phi \colon (M,h) \to (\hM,\hh)$ is a conformal diffeomorphism, then
    \begin{equation*}
        \Yext(j) = \Yext(\Phi \circ j) .
    \end{equation*}
\end{lemma}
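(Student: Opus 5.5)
The plan has two parts. First, show that $\Yext(j)$ depends only on the conformal class of $g$ (and on $j$ as a conformal immersion). Second, show that it is unchanged by post-composition with a conformal diffeomorphism of the target.

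\emph{Well-definedness.} The energy $\mE$ and hence $\Yext(j)$ are a priori computed with a chosen metric $g$ on $\Sigma$ for which $j$ is conformal. Any other admissible metric has the form $\hg = e^{2\Upsilon}g$ with $\Upsilon \in C^\infty(\Sigma)$. Given $u \in \CC^\infty(\Sigma\setminus\partial\Sigma)$, set $v := e^{\frac{n-2}{2}\Upsilon}u$; this is again compactly supported and smooth, and $u \mapsto v$ is a bijection of $\CC^\infty(\Sigma\setminus\partial\Sigma)$. Equation~\eqref{eqn:conformally-covariant} of Lemma~\ref{lem:energy-invariance} gives $\mE^{\hg}(u) = \mE^g(v)$. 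Since $\dvol_{\hg} = e^{n\Upsilon}\dvol_g$, we also have
\begin{equation*}
    \int_\Sigma \lv u \rv^{\frac{2n}{n-2}} \dvol_{\hg} = \int_\Sigma \lv v \rv^{\frac{2n}{n-2}} \dvol_g .
\end{equation*}
Hence $\mF^{\hg}(u) = \mF^g(v)$, and taking infima over the bijection shows that $\Yext$ computed with $\hg$ equals $\Yext$ computed with $g$. The operator $L$, and so $\mQ$, is defined for conformal immersions as explained after Equation~\eqref{eqn:q-transformation}, so the only thing to check is this independence of the representative metric.

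\emph{Invariance under $\Phi$.} Write $\Phi^\ast\hh = e^{2\omega}h$ on $M$. I will argue through the intermediate immersion $j \colon (\Sigma,g) \to (M,e^{2\omega}h)$. Since $\Phi \colon (M,e^{2\omega}h) \to (\hM,\hh)$ is an isometry, naturality~\eqref{eqn:natural} with $\Psi = \mathrm{id}_\Sigma$ shows that the operators $L$ for $j$ into $(M,e^{2\omega}h)$ and for $\Phi\circ j$ into $(\hM,\hh)$ coincide. Thus the two energies, and so the two Yamabe constants, agree.

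It remains to compare $j$ into $(M,h)$ with $j$ into $(M,e^{2\omega}h)$, both with the same metric $g$ on $\Sigma$. By the first step we may choose $g$ freely in its conformal class, so take $g = j^\ast h$ locally, or more generally $g = e^{2\sigma}j^\ast h$. Then $j$ is isometric from $(\Sigma, e^{2j^\ast\omega}j^\ast h)$ into $(M,e^{2\omega}h)$. The transformation law~\eqref{eqn:conformal-transformation}, as used in the proof of Lemma~\ref{lem:conformal-invariance}, says precisely that rescaling the ambient metric and the induced metric together conjugates $L$ in the stated way. Consequently the energies are related by the same substitution $u \mapsto e^{\frac{n-2}{2}j^\ast\omega}u$ as in the first step. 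The same $L^{\frac{2n}{n-2}}$ bookkeeping then shows the infima agree.

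The main obstacle is keeping straight that $L$ depends on the ambient conformal class only through $j^\ast\Upsilon$. This is the content of the remark that $L$ and $Q$ are defined for conformal immersions, and it must be invoked carefully so that no hidden dependence on $h$ beyond its conformal class enters. The rest is the routine change of variables.
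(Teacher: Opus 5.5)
Your proposal is correct and follows the paper's proof. Well-definedness comes from the substitution $v = e^{\frac{n-2}{2}\Upsilon}u$, Equation~\eqref{eqn:conformally-covariant}, and the matching of the $L^{\frac{2n}{n-2}}$-norms, and invariance under $\Phi$ comes from naturality~\eqref{eqn:natural} applied to the isometry $\Phi \colon (M,\Phi^\ast\hh) \to (\hM,\hh)$; your extra comparison of $h$ with $\Phi^\ast\hh = e^{2\omega}h$ only spells out what the paper leaves implicit, and could be shortened, since for a fixed metric $g$ on $\Sigma$ the operator $L$ of a conformal immersion depends only on the conformal class of $h$, so the two energies literally coincide.
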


\begin{proof}
    To show that $\Yext(j)$ is well-defined, we must show that if $\hg = e^{2\Upsilon}g$, then
    \begin{multline*}
        \inf \left\{ \mE^{\hg}(u) \suchthat u \in \CC^\infty(\Sigma \setminus \partial\Sigma) , \int_\Sigma \lv u \rv^{\frac{2n}{n-2}} \dvol_{\hg} = 1 \right\} \\
            = \inf \left\{ \mE^g(u) \suchthat u \in \CC^\infty(\Sigma \setminus \partial\Sigma) , \int_\Sigma \lv u \rv^{\frac{2n}{n-2}} \dvol_g = 1 \right\} .
    \end{multline*}
    Let $u \in \CC^\infty(\Sigma \setminus \partial\Sigma)$.
    On the one hand, Lemma~\ref{lem:energy-invariance} implies that
    \begin{equation}
        \label{eqn:energy-invariance}
        \mE^{\hg}(u) = \mE^g\bigl( e^{\frac{n-2}{2}\Upsilon}u \bigr) .
    \end{equation}
    On the other hand, direct computation yields
    \begin{equation*}
        \int_\Sigma \lv u \rv^{\frac{2n}{n-2}} \dvol_{\hg} = \int_\Sigma \lv e^{\frac{n-2}{2}\Upsilon}u \rv^{\frac{2n}{n-2}} \dvol_g .
    \end{equation*}
    It readily follows that $\Yext(j)$ is well-defined.
    The final conclusion readily follows from Equation~\eqref{eqn:natural}.
\end{proof}

The remaining goal of this section is to find a sufficient condition for the existence of a smooth positive minimizer of the extrinsic Yamabe constant~\eqref{eqn:defn-extrinsic-yamabe-constant}.
Analogous to a result of Aubin~\cite{Aubin1976}, our sufficient condition involves a comparison of the extrinsic Yamabe constant and the Aubin--Talenti constant~\eqref{eqn:aubin-talenti}.
We first observe, also in analogy with the Yamabe Problem, that the extrinsic Yamabe constant is always bounded above by the Aubin--Talenti constant $A_n$.

\begin{lemma}
    \label{lemma:aubin-estimate}
    Let $(\Sigma^n,g)$, $n \geq 3$, be a Riemannian manifold-with-boundary and let $j \colon (\Sigma^n,g) \to (M^N,h)$ be a conformal immersion.
    Then
    \begin{equation*}
        \Yext(j) \leq A_n .
    \end{equation*}
\end{lemma}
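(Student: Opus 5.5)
We construct test functions concentrating at an interior point, as in Aubin's argument for the Yamabe problem. Fix a point $p \in \Sigma \setminus \partial\Sigma$. By Lemma~\ref{lem:yamabe-constant-conformal-invariant}, $\Yext(j)$ depends only on the conformal class of $g$. So we may work with any conformally related metric; the simplest choice is to keep $g$ and take normal coordinates $x$ centered at $p$ on a ball $B_{2\delta}(p) \subset \Sigma\setminus\partial\Sigma$. The key point is that $Q$ is a smooth function near $p$, since it is locally determined by $j$ and $h$, and hence is bounded by some constant $C$ on $B_{2\delta}(p)$. Thus the potential term is a lower-order perturbation.

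For $\epsilon>0$, set
\begin{equation*}
    u_\epsilon(x) := \eta(x)\,\bigl(\epsilon^2+\lv x\rv^2\bigr)^{-\frac{n-2}{2}} ,
\end{equation*}
where $\eta$ is a cutoff equal to $1$ on $B_\delta(p)$ and supported in $B_{2\delta}(p)$. Then $u_\epsilon\in\CC^\infty(\Sigma\setminus\partial\Sigma)$. We estimate each piece of $\mF(u_\epsilon)$ as $\epsilon\to0$.

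\textbf{The gradient and $L^{2n/(n-2)}$ terms.} In normal coordinates $g_{ij}=\delta_{ij}+O(\lv x\rv^2)$ and $\dvol=(1+O(\lv x\rv^2))\,dx$. The standard computation (as in Lee--Parker) then gives two estimates. First, $\int\lv\nabla u_\epsilon\rv^2\dvol$ equals its Euclidean counterpart for the Aubin--Talenti bubble up to an error $O(\epsilon^{-(n-2)}\cdot\epsilon^{2})$ from the metric, plus $O(1)$ from the cutoff region. (In dimension $4$ the metric error carries an additional logarithm.) Second, $\int\lv u_\epsilon\rv^{\frac{2n}{n-2}}\dvol$ equals its Euclidean counterpart up to a relative error $o(1)$. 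After normalizing by $\epsilon^{n-2}$, i.e.\ working with $\epsilon^{\frac{n-2}{2}}u_\epsilon$, both errors are $o(1)$ relative to the leading terms. The leading Euclidean terms realize the equality case of the Sobolev inequality~\eqref{eqn:sobolev}, so their ratio is exactly $A_n$.

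\textbf{The curvature term.} We have
\begin{equation*}
    \epsilon^{n-2}\Bigl\lv\int\tfrac{n-2}{2}Qu_\epsilon^2\dvol\Bigr\rv \leq C\epsilon^{n-2}\int_{B_{2\delta}}(\epsilon^2+\lv x\rv^2)^{-(n-2)}\,dx .
\end{equation*}
The right-hand side is $O(\epsilon^2)$ for $n\geq5$, $O(\epsilon^2\lv\log\epsilon\rv)$ for $n=4$, and $O(\epsilon)$ for $n=3$. In every case it tends to $0$.

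Combining these estimates gives $\mF(u_\epsilon)\to A_n$ as $\epsilon\to0$, and hence $\Yext(j)\leq A_n$.

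The main obstacle is the careful bookkeeping of the error terms in low dimensions. For $n=3,4$ the bubble's $L^2$ mass is not small relative to $\epsilon^{-(n-2)}$ uniformly, so we must check that after normalization it still vanishes. It does, but only at the rates $\epsilon$ and $\epsilon^2\lv\log\epsilon\rv$ noted above.

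Alternatively, a cleaner route avoids the metric-error bookkeeping. Since the estimate only needs $\mF(u_\epsilon)\to A_n$, we may replace $g$ near $p$ by a conformal metric that is flat to second order at $p$ (conformal normal coordinates). This is permissible by Lemma~\ref{lem:yamabe-constant-conformal-invariant}, and the argument above then goes through with only the $Q$ term to control.
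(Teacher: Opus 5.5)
Your proof is correct. It uses the same underlying idea as the paper: localize at an interior point $p$, where the metric looks Euclidean and the potential term $\frac{n-2}{2}Qu^2$ is lower order. The execution differs, however. The paper rescales the metric rather than the test function. It first restricts to a normal ball $\Omega=\exp_p(B_1)$, which can only increase $\Yext$. It then pulls back by $\varphi_\varepsilon=\exp_p\circ\delta_\varepsilon$ and uses naturality and conformal invariance to get $\Yext(j\vert_\Omega)=\Yext(j\circ\varphi_\varepsilon)$. It observes that $\varepsilon^{-2}\varphi_\varepsilon^\ast g\to dx^2$ in $C^2$ on compact sets, so the rescaled potential $\varepsilon^2\varphi_\varepsilon^\ast Q$ tends to zero. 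Finally, it concludes by upper semicontinuity of $\Yext$ under such convergence.

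That route needs no explicit extremal and no dimension-dependent rates: any fixed $u\in\CC^\infty(\bR^n)$ with Euclidean Sobolev quotient close to $A_n$ does the job. The price is that it leans on a cited semicontinuity lemma. Your cut-off Aubin--Talenti computation is self-contained and makes the vanishing of each error term explicit.

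There are two small inaccuracies, neither of which affects the conclusion.
\begin{enumerate}
\item For $n=3$, the metric error in the gradient term is only $O(1)$, not $O(\epsilon^{-(n-2)}\epsilon^2)=O(\epsilon)$. The reason is that $\int_{B_{2\delta}}\lv x\rv^4(\epsilon^2+\lv x\rv^2)^{-3}\,dx$ converges to $\int_{B_{2\delta}}\lv x\rv^{-2}\,dx$. This is still $o(\epsilon^{-(n-2)})$, and it is the same size as your cutoff error.
\item In your alternative route, no conformal change can make $g$ flat to second order at $p$ when $n\geq4$ and $W^g(p)\neq0$, because the Weyl tensor is conformally invariant. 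Conformal normal coordinates normalize $\det g$, not the full metric. That route is unnecessary anyway, since $g_{ij}=\delta_{ij}+O(\lv x\rv^2)$ already suffices for your main argument.
\end{enumerate}
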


\begin{proof}
    Pick a point $p \in \Sigma \setminus \partial\Sigma$.
    Since $\Yext(j)$ is conformally invariant, we may assume that $j$ is an isometric immersion.
    By homothetically rescaling $g$ and $h$, we may also assume that the restriction $\exp_p \vert_{B_1} \colon B_1 \to \Sigma$ of the exponential map to the unit ball $B_1 \subset T_p\Sigma$ is a diffeomorphism and that $\Omega := \exp_p(B_1)$ has compact closure in $\Sigma \setminus \partial\Sigma$.
    
    Clearly
    \begin{equation*}
        \Yext( j \vert_\Omega ) \geq \Yext(j) .
    \end{equation*}
    Given $\varepsilon>0$, define $\delta_\varepsilon \colon T_pM \to T_pM$ by $\delta_\varepsilon(X) := \varepsilon X$.
    Consider the diffeomorphism
    \begin{equation*}
        \varphi_\varepsilon := \exp_p \mathop{\circ} \delta_\varepsilon \colon B_{\varepsilon^{-1}} \to \Omega .
    \end{equation*}
    On the one hand, Lemma~\ref{lem:yamabe-constant-conformal-invariant} yields
    \begin{equation*}
        \Yext( j \vert_\Omega ) = \Yext( j \circ \varphi_\varepsilon ) .
    \end{equation*}
    On the other hand, direct computation implies that $\varepsilon^{-2}\varphi_\varepsilon^\ast g \to dx^2$ in the compact-open $C^2$-topology as $\varepsilon \to 0$;
    i.e.\ for any compact set $K \subseteq T_p\Sigma$, it holds that
    \begin{equation*}
        \lV \varepsilon^{-2}\varphi_\varepsilon^\ast g - dx^2 \rV_{C^2(K)} \to 0 \quad \text{as $\varepsilon \to 0$}.
    \end{equation*}
    The conclusion follows from the upper semicontinuity of $\Yext$ (cf.\ \cite{GrosseNardmann2014}*{Lemma~4.1}).
\end{proof}

Our goal is to show that on conformal immersions of compact Riemannian manifolds-with-boundary $\Sigma$, if $\Yext(j) < A_n$, then a minimizer of the extrinsic Yamabe constant exists.
To that end, recall that the completion of $C^\infty(\Sigma \setminus \partial\Sigma)$ with respect to the $W^{1,2}$-norm is the space $W_0^{1,2}(\Sigma)$ of $W^{1,2}$-functions with trace zero along $\partial\Sigma$.
The strict inequality prohibits minimizing sequences from concentrating, which is proven differently in two separate cases.

When the extrinsic Yamabe constant is sufficiently small, existence of a minimizer follows from direct arguments (cf.\ \cite{Trudinger1968}*{Theorem~2}).
We state this result for negative extrinsic Yamabe constant, as the nonnegative case can be handled by other methods.

\begin{lemma}
    \label{lem:existence-nonpositive}
    Let $(\Sigma^n,g)$, $n \geq 3$, be a compact Riemannian manifold-with-boundary and let $j \colon (\Sigma^n,g) \to (M^N,h)$ be a conformal immersion such that
    \begin{equation*}
        \Yext(j) < 0 .
    \end{equation*}
    Then there is a positive $u \in C^\infty(\Sigma)$ such that $u \vert_{\partial\Sigma}=0$ and $\int u^{\frac{2n}{n-2}}\dvol = 1$ and
    \begin{equation*}
        \mE(u) = \Yext(j) .
    \end{equation*}
\end{lemma}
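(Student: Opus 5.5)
Since $\Sigma$ is compact and $\Yext(j)$ is conformally invariant (Lemma~\ref{lem:yamabe-constant-conformal-invariant}), I will fix the metric $g = j^\ast h$ locally, or any smooth representative in the conformal class. Then $Q$ is a bounded smooth function on $\Sigma$. I will use the direct method on $W^{1,2}_0(\Sigma)$, following Trudinger's argument for the negative case.

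First I take a minimizing sequence $u_k \in \CC^\infty(\Sigma\setminus\partial\Sigma)$ with $\int \lv u_k\rv^{\frac{2n}{n-2}} = 1$ and $\mE(u_k) \to \Yext(j)$. Replacing $u_k$ by $\lv u_k\rv$, which after smoothing is allowed because $\mE(\lv u\rv) = \mE(u)$ on $W^{1,2}_0$ and the infimum over $W^{1,2}_0$ agrees with that over $\CC^\infty$ by density, I may assume $u_k \geq 0$. Finiteness of $\Yext(j)$ follows from H\"older's inequality: $\int Q u^2 \geq -\lV Q\rV_\infty \Vol(\Sigma)^{2/n}$. The same bound gives $\int \lv\nabla u_k\rv^2 \leq C$, so $(u_k)$ is bounded in $W^{1,2}_0$.

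Next, by Rellich compactness I pass to a subsequence with $u_k \rightharpoonup u$ weakly in $W^{1,2}_0$, strongly in $L^2$, and almost everywhere. Weak lower semicontinuity of the Dirichlet term, together with strong $L^2$ convergence of the $Q$-term, gives $\mE(u) \leq \liminf \mE(u_k) = \Yext(j)$. By Fatou, $\lambda := \int u^{\frac{2n}{n-2}} \leq 1$.

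The key step is ruling out loss of mass, and this is where negativity enters. If $u = 0$, then $\mE(u_k) \geq -\frac{n-2}{2}\lV Q\rV_\infty \int u_k^2 \to 0$, contradicting $\Yext(j) < 0$. So $u \neq 0$ and $0 < \lambda \leq 1$. By homogeneity,
\begin{equation*}
    \mE(u) \geq \Yext(j)\,\lambda^{\frac{n-2}{n}} \geq \Yext(j) ,
\end{equation*}
where the second inequality uses $\Yext(j) < 0$ and $\lambda^{\frac{n-2}{n}} \leq 1$. Combined with $\mE(u) \leq \Yext(j)$, this forces $\lambda = 1$. Hence $u$ is a minimizer with unit $L^{\frac{2n}{n-2}}$-norm. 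This is the step I expect to be the crux; its proof is easy here precisely because the sign makes the normalization loss favorable.

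Finally, the minimizer is a nonnegative weak solution of $Lu = \Yext(j)\, u^{\frac{n+2}{n-2}}$ with $u\vert_{\partial\Sigma} = 0$.
\begin{itemize}
    \item \emph{Boundedness.} The Brezis--Kato/Trudinger Moser iteration gives $u \in L^p$ for all $p$, because the nonlinearity $u^{\frac{4}{n-2}} \cdot u$ has coefficient in $L^{n/2}$.
    \item \emph{Smoothness.} Elliptic $L^p$ and Schauder theory up to the boundary then give $u \in C^\infty(\Sigma)$.
    \item \emph{Positivity.} Writing $Lu + c\,u = (\text{nonnegative})$ with $c$ a large constant, the strong maximum principle yields $u > 0$ in $\Sigma\setminus\partial\Sigma$.
\end{itemize}
The regularity steps are routine, since $\Sigma$ is compact with smooth boundary and $Q$ is smooth.
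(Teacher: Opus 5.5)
Your argument is correct, but it is organized differently from the paper's.

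\textbf{Your route.} You minimize $\mE$ directly under $\lV u\rV_{2^\ast}=1$ and use $\Yext(j)<0$ twice.
\begin{enumerate}
    \item It excludes a zero weak limit, since $\mE(u_k)\geq -C\lV u_k\rV_2^2$ and $u_k\to u$ in $L^2$.
    \item It makes any loss of $L^{2^\ast}$-mass favorable, via $\mE(u)\geq\Yext(j)\lambda^{\frac{n-2}{n}}\geq\Yext(j)$. Combined with lower semicontinuity, this forces $\lambda=1$.
\end{enumerate}

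\textbf{The paper's route.} It introduces the deficit functional $\mD(u)=\mE(u)-\Yext(j)\lV u\rV_{2^\ast}^2\geq0$ and minimizes it over the unit sphere of $L^2$.
\begin{enumerate}
    \item It shows the infimum $\mathcal{Y}(j,g)$ is zero by rescaling a $\Yext$-minimizing sequence to unit $L^2$-norm. To keep the rescaling bounded, it uses the Sobolev inequality and $\mE(u_k)<0$ to bound $\lV u_k\rV_2$ from below; this is your nonvanishing step in another guise.
    \item Because $-\Yext(j)\geq0$, $\mD$ is weakly lower semicontinuous, and the $L^2$-constraint survives in the limit by Rellich.
    \item The limit is therefore a nonzero $u$ with $\mD(u)=0$, i.e.\ a minimizer after rescaling.
\end{enumerate}

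\textbf{Comparison.} The mechanism is the same: compactness in $L^2$ plus the sign of $\Yext(j)$. Your version is more direct and dispenses with the auxiliary invariant. The paper's $L^2$-normalization instead makes nonvanishing of the limit automatic, and replaces your final homogeneity comparison by lower semicontinuity of $\mD$. Your regularity and positivity steps agree with the paper's appeal to elliptic regularity and the strong maximum principle.

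\textbf{A caveat shared with the paper.} By the Hopf lemma, $u$ vanishes exactly to first order on $\partial\Sigma$. So when $\frac{n+2}{n-2}\notin\bN$, the term $u^{\frac{n+2}{n-2}}$, and hence $u$ itself, is not smooth up to $\partial\Sigma$. What elliptic theory actually gives is $u\in C^\infty(\Sigma\setminus\partial\Sigma)\cap C^{2,\alpha}(\Sigma)$ with $u>0$ in the interior. This suffices for the paper's purposes.
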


\begin{remark}
    Equation~\eqref{eqn:q-on-einstein} implies that the extrinsic Yamabe constant of a conformal immersion of a compact manifold-with-boundary into an Einstein manifold with nonnegative scalar curvature is nonnegative.
    In particular, Lemma~\ref{lem:existence-nonpositive} is not needed in this paper, but rather is included for completeness.
\end{remark}

\begin{proof}
    Define $\mathcal{D} \colon \CC^\infty(\Sigma \setminus \partial\Sigma) \to \bR$ by
    \begin{equation*}
        \mathcal{D}(u) := \mE(u) - \Yext(j) \lV u \rV_{2^\ast}^2 ,
    \end{equation*}
    where $2^\ast := \frac{2n}{n-2}$ and $\lV \cdot \rV_p$ denotes the $L^p$-norm.
    Consider the Riemannian invariant
    \begin{equation*}
        \mathcal{Y}(j,g) := \inf \left\{ \mD(u) \suchthat u \in \CC^\infty(\Sigma \setminus \partial\Sigma) , \lV u \rV_2 = 1 \right\} .
    \end{equation*}
    Clearly $\mathcal{Y}(j,g) \geq 0$.
    Let $(u_k)_{k=1}^\infty$ be a minimizing sequence for the extrinsic Yamabe constant~\eqref{eqn:defn-extrinsic-yamabe-constant}.
    On the one hand, since $\Yext(j) < 0$, it holds that $\mE(u_k) < 0$ for $k \gg 1$.
    On the other hand, the Sobolev embedding $W^{1,2}(\Sigma) \hookrightarrow L^{2^\ast}(\Sigma)$ yields a constant $C>0$ such that
    \begin{equation*}
        C^{-1} \lV v \rV_{2^\ast}^2 \leq \int_\Sigma \bigl( \lv \nabla v \rv^2 + v^2 \bigr) \dvol \leq \mE(v) + C \lV v \rV_2^2
    \end{equation*}
    for all $v \in W_0^{1,2}(\Sigma)$.
    Combining these observations yields $\lV u_k \rV_2^2 \geq C^{-2}$ for all $k \gg 1$.
    Set $\lambda_k := \lV u_k \rV_2^{-1}$.
    Then $\lV \lambda_k u_k \rV_2 = 1$ and $\mathcal{D}(\lambda_ku_k) \to 0$ as $k \to \infty$.
    Therefore $\mathcal{Y}(j,g)=0$.

    Now let $(u_k)$ be a minimizing sequence for $\mathcal{Y}(j,g)$.
    Standard arguments imply that there is a $u \in W_0^{1,2}(\Sigma)$ such that $u_k$ converges weakly in $W_0^{1,2}(\Sigma)$ and $L^{2^\ast}(\Sigma)$, and strongly in $L^2(\Sigma)$, to $u$.
    In particular, $\lV u \rV_2 = 1$.
    Since $\Yext(j) \leq 0$, Fatou's Lemma yields $\mathcal{D}(u) \leq \mathcal{Y}(j,g)=0$.
    Thus $\mathcal{D}(u)=0$, and hence, after rescaling $u$ if necessary, $u$ is a minimizer for $\Yext(j)$.
    The conclusion follows from elliptic regularity and the strong maximum principle (cf.\ \cite{Trudinger1968}*{Theorem~3}).
\end{proof}

When the extrinsic Yamabe constant is nonnegative, proving the existence of a minimizer involves ruling out the possibility that minimizing sequences form bubbles.
We accomplish this using Lions' Concentration Compactness Principle~\cite{Lions1985a}, whose formulation we record in terms of the Dirichlet form~\eqref{eqn:energy-integral}.

\begin{lemma}
    \label{lem:concentration-compactness}
    Let $(\Sigma^n,g)$, $n \geq 3$, be a compact Riemannian manifold-with-boundary and let $j \colon (\Sigma^n,g) \to (S^N,d\theta^2)$ be an isometric immersion.
    Suppose that $(u_k)_{k=1}^\infty$ is a sequence in $\CC^\infty(\Sigma \setminus \partial\Sigma)$ that is bounded in $W_0^{1,2}(\Sigma)$.
    Then there exist $u \in W_0^{1,2}(\Sigma)$ and measures $\mu,\nu$ on $\Sigma$ such that
    \begin{align*}
        u_k & \rightharpoonup u , && \text{in $W_0^{1,2}$}, \\
        \mu_k := \left( \lv \nabla u_k \rv_g^2 + \frac{n-2}{2}Q^gu_k^2 \right) \dvol_g & \to \mu , && \text{as measures} , \\
        \nu_k := \lv u_k \rv^{\frac{2n}{n-2}} \dvol_g & \to \nu , && \text{as measures} .
    \end{align*}
    Additionally, there is a countable set $J \subset \Sigma$ and a set $\{ \nu_p \}_{p \in J} \subset (0,\infty)$ such that
    \begin{equation*}
        \nu = \lv u \rv^{\frac{2n}{n-2}} \dvol_g + \sum_{p \in J} \nu_p \delta_p ,
    \end{equation*}
    where $\delta_p$ denotes the Dirac measure at $p$.
    Moreover,
    \begin{equation*}
        \mu \geq \Bigl( \lv \nabla u \rv_g^2 + \frac{n-2}{2}Qu_k^2 \Bigr) \dvol_g + A_n \sum_{p \in J} \nu_p^{(n-2)/n} \delta_p .
    \end{equation*}
\end{lemma}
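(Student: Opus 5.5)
Since $\Sigma$ is compact with boundary, the domain involved is a relatively compact subset of a Riemannian manifold. The plan is to reduce to Lions' original concentration-compactness lemma for the Dirichlet energy. The only genuinely new input is a local sharp Sobolev inequality on $(\Sigma,g)$ with constant $A_n$ up to lower-order errors. The $Q$-term is harmless because it is a bounded zeroth-order term and $u_k \to u$ strongly in $L^2$.

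First, extract subsequences. Boundedness in $W_0^{1,2}(\Sigma)$ gives weak convergence $u_k \rightharpoonup u$ in $W_0^{1,2}$ and, by Rellich, strong convergence $u_k \to u$ in $L^2$. Since $Q$ is a smooth bounded function on the compact manifold $\Sigma$, the measures $\mu_k$ and $\nu_k$ have bounded total mass. By weak-$*$ compactness of Radon measures on the compact space $\Sigma$ we obtain limits $\mu,\nu$. Moreover $Qu_k^2\,\dvol \to Qu^2\,\dvol$ as measures. So it suffices to treat $\tilde\mu_k := \lv\nabla u_k\rv^2\dvol \to \tilde\mu$, with $\mu = \tilde\mu + \frac{n-2}{2}Qu^2\,\dvol$.

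Next, set $v_k := u_k - u$. Then $v_k \rightharpoonup 0$ in $W_0^{1,2}$ and $v_k \to 0$ in $L^2$. By Brezis--Lieb (after passing to an a.e.\ convergent subsequence), $\lv u_k\rv^{2^\ast}\dvol - \lv v_k\rv^{2^\ast}\dvol \to \lv u\rv^{2^\ast}\dvol$ as measures. Similarly, $\lv\nabla u_k\rv^2 - \lv\nabla v_k\rv^2 - \lv\nabla u\rv^2 \to 0$ weakly, since the cross term tends to zero by weak convergence. Let $\omega$ and $\bar\nu$ be the limits of $\lv\nabla v_k\rv^2\dvol$ and $\lv v_k\rv^{2^\ast}\dvol$.

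The key step is the reverse H\"older inequality
\[
\Bigl(\int \lv\phi\rv^{2^\ast} d\bar\nu\Bigr)^{2/2^\ast} \leq A_n^{-1}\int \phi^2\, d\omega
\]
for all $\phi \in C^\infty(\Sigma)$. To prove it, I would use the almost-sharp Sobolev inequality on compact manifolds: for every $\varepsilon>0$ there is $C_\varepsilon$ with $(A_n^{-1}+\varepsilon)^{-1}\lV w\rV_{2^\ast}^2 \leq \lV\nabla w\rV_2^2 + C_\varepsilon\lV w\rV_2^2$ for all $w\in W_0^{1,2}(\Sigma)$. I would obtain this by extending $\Sigma$ slightly to a closed manifold or open neighborhood and applying Aubin's theorem; alternatively, one can argue via a partition of unity into small normal-coordinate charts, where the metric is $C^0$-close to Euclidean, and use the Euclidean inequality~\eqref{eqn:sobolev}. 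Apply it to $w = \phi v_k$. We have $\lV\nabla(\phi v_k)\rV_2^2 = \int \phi^2\lv\nabla v_k\rv^2 + o(1)$ because $v_k\to0$ in $L^2$, and the $L^2$ term also vanishes. Letting $k\to\infty$ and then $\varepsilon\to0$ gives the claim.

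Lions' lemma then shows that a measure satisfying such a reverse H\"older inequality is purely atomic, with at most countably many atoms: $\bar\nu = \sum_{p\in J}\nu_p\delta_p$ and $\omega \geq A_n\sum_{p \in J}\nu_p^{2/2^\ast}\delta_p$. For the latter, test with $\phi$ concentrating at $p$. Hence $\nu = \lv u\rv^{2^\ast}\dvol + \sum\nu_p\delta_p$. For $\tilde\mu$, weak lower semicontinuity gives $\tilde\mu \geq \lv\nabla u\rv^2\dvol$. Combining this with $\tilde\mu = \lv\nabla u\rv^2\dvol + \omega$ yields $\tilde\mu \geq \lv\nabla u\rv^2\dvol + A_n\sum\nu_p^{(n-2)/n}\delta_p$. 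Here mutual singularity lets the atomic part and the absolutely continuous part be added. Adding back the $Q$-term gives the stated bound (the $u_k^2$ there should read $u^2$).

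The main obstacle is the almost-sharp local Sobolev constant $A_n$ on a compact manifold with boundary. It is handled by freezing coordinates on small balls: there the metric distortion costs a factor $1+o(1)$, and commutators with cutoffs are absorbed into the $L^2$ terms, which vanish.
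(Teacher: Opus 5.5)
Your proposal is correct and follows the same route as the paper, which simply passes to a subsequence and invokes Lions' concentration-compactness lemma. You in effect reprove that lemma on the manifold-with-boundary, treating the $Q$-term as a compact lower-order perturbation, and you correctly identify the one extra input needed to recover the Euclidean constant $A_n$ on a curved domain: the almost-sharp Sobolev inequality $\lVert w\rVert_{2^\ast}^2 \leq (A_n^{-1}+\varepsilon)\lVert \nabla w\rVert_2^2 + C_\varepsilon \lVert w\rVert_2^2$ on $W_0^{1,2}(\Sigma)$. You are also right that $Qu_k^2$ in the final inequality should read $Qu^2$.
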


\begin{proof}
    Since $\Sigma$ is compact, by picking a subsequence of $(u_k)$ if necessary, we can assume that $(u_k)$ converges weakly in $W_0^{1,2}$ and that $(\mu_k)$ and $(\nu_k)$ converge as measures.
    The conclusion then follows from the Concentration Compactness Principle (cf.\ \cite{Lions1985a}*{Lemma~I.1 and Example~6}).
\end{proof}

We now give our sufficient condition for the existence of a minimizer of $\Yext(j)$.

\begin{proposition}
    \label{prop:aubin-existence}
    Let $(\Sigma^n,g)$, $n \geq 3$, be a compact Riemannian manifold-with-boundary and let $j \colon (\Sigma^n,g) \to (M^N,h)$ be a conformal immersion such that
    \begin{equation}
        \label{eqn:aubin-comparison}
        \Yext(j) < A_n .
    \end{equation}
    Then there is a positive $u \in C^\infty(\Sigma)$ such that $u \vert_{\partial\Sigma}=0$ and $\int u^{\frac{2n}{n-2}} \dvol = 1$ and
    \begin{equation*}
        \mathcal{E}(u) = \Yext(j) .
    \end{equation*}
\end{proposition}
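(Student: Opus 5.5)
Split into the cases $\Yext(j)<0$ and $0\le \Yext(j)<A_n$. The negative case is exactly Lemma~\ref{lem:existence-nonpositive}, so assume $\Yext(j)\ge 0$.

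By conformal invariance (Lemma~\ref{lem:yamabe-constant-conformal-invariant}) and compactness of $\Sigma$, we may assume $j$ is isometric. Since the claim is local in nature, Lemma~\ref{lem:concentration-compactness} applies with the target $S^N$ replaced by $(M,h)$; its proof only uses compactness of $\Sigma$ and boundedness of $Q$.

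\textbf{Step 1: a bounded minimizing sequence.} Take $(u_k)\subset \CC^\infty(\Sigma\setminus\partial\Sigma)$ with $\lV u_k\rV_{2^\ast}=1$ and $\mE(u_k)\to \Yext(j)$. Replacing $u_k$ by $\lv u_k\rv$ (after smoothing) does not change the energy, so we may take $u_k\ge 0$. We need a $W^{1,2}_0$ bound. Since $Q$ is bounded,
\begin{equation*}
\int \lv\nabla u_k\rv^2 \le \mE(u_k)+C\lV u_k\rV_2^2 \le \mE(u_k)+C'\lV u_k\rV_{2^\ast}^2 ,
\end{equation*}
using H\"older on the compact $\Sigma$. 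This gives the bound.

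\textbf{Step 2: concentration compactness.} Extract $u\in W^{1,2}_0$ and measures $\mu,\nu$ as in Lemma~\ref{lem:concentration-compactness}. Because $\lv u_k\rv^2\to \lv u\rv^2$ strongly in $L^2$ by Rellich, the lower-order $Q$-term passes to the limit. Write $a:=\lV u\rV_{2^\ast}^{2^\ast}$ and $b_p:=\nu_p$, so that $1=\nu(\Sigma)=a+\sum_p b_p$ (no mass escapes, since $\Sigma$ is compact). Then
\begin{equation*}
\Yext(j)=\lim\mE(u_k)\ge \mu(\Sigma)\ge \mE(u)+A_n\sum_p b_p^{(n-2)/n}\ge \Yext(j)\,a^{(n-2)/n}+A_n\sum_p b_p^{(n-2)/n}.
\end{equation*}

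\textbf{Step 3: excluding concentration.} Since $\Yext(j)\ge0$ and $A_n>\Yext(j)$,
\begin{equation*}
\Yext(j)\ge \Yext(j)\Bigl(a^{\frac{n-2}{n}}+\sum_p b_p^{\frac{n-2}{n}}\Bigr)+(A_n-\Yext(j))\sum_p b_p^{\frac{n-2}{n}} .
\end{equation*}
By strict concavity of $t\mapsto t^{(n-2)/n}$ with $a+\sum b_p=1$, the bracket is $\ge1$. Hence $\sum_p b_p^{(n-2)/n}=0$, so $J=\emptyset$ and $a=1$. Therefore $\lV u\rV_{2^\ast}=1$ and $\mE(u)\le\Yext(j)$, which means $u$ is a minimizer in $W^{1,2}_0$. (A density argument shows the infimum over $W_0^{1,2}$ equals $\Yext(j)$.)

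\textbf{Step 4: regularity and positivity.} The minimizer $u$ is a nonnegative weak solution of
\begin{equation*}
Lu=\Yext(j)\,u^{\frac{n+2}{n-2}},\qquad u\vert_{\partial\Sigma}=0 .
\end{equation*}
Trudinger's regularity argument for the critical exponent gives $u\in L^q$ for all $q$, then $u\in C^\infty$ by elliptic regularity. The strong maximum principle applied to $\Delta u+cu\ge0$, with $c$ bounded, gives $u>0$ in the interior.

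The main obstacle is Step 2. It needs the defect inequality $\mu\ge \cdots + A_n\nu_p^{(n-2)/n}\delta_p$ with the sharp Euclidean constant on a curved immersed $\Sigma$. This rests on the local Sobolev inequality with constant $A_n+\epsilon$ near each point: metrics are nearly Euclidean on small balls, and the $Q$-term is lower order. I would justify it by localizing with cutoffs, as in Lions' argument.
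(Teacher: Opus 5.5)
Your proposal is correct and is essentially the paper's proof. You reduce to $\Yext(j)\ge 0$ via Lemma~\ref{lem:existence-nonpositive}, take a nonnegative minimizing sequence bounded in $W_0^{1,2}$, and apply concentration compactness; you are right that this needs only compactness of $\Sigma$ and boundedness of $Q$, which the paper uses tacitly for general $(M,h)$. You then use subadditivity of $t\mapsto t^{(n-2)/n}$ with $\Yext(j)<A_n$ to force $J=\emptyset$, where your explicit term $(A_n-\Yext(j))\sum_p\nu_p^{(n-2)/n}$ is a tidier version of the paper's ``equality only if'' step, and finish with Trudinger-type regularity and the strong maximum principle.

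One cosmetic correction: Rellich gives $u_k\to u$ in $L^2$, hence $u_k^2\to u^2$ in $L^1$ (not in $L^2$ once $n\ge 4$), and this is exactly what the $Q$-term needs.
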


\begin{proof}
    By Lemma~\ref{lem:existence-nonpositive}, it suffices to assume that $\Yext(j) \geq 0$.
    Let $(u_k)_{k=1}^\infty$ be a minimizing sequence for $\Yext(j)$;
    i.e.
    \begin{align*}
        \lV u_k \rV_{2^\ast} & = 1 , && \text{for all $k \in \bN$} , \\
        \mE(u_k) & \to \Yext(j) , && \text{as $k \to \infty$} .
    \end{align*}
    Since $\mE(\lv u_k \rv) \leq \mE(u_k)$, we may also assume that each $u_k$ is nonnegative.
    Observe that $(u_k)$ is bounded in $W_0^{1,2}(\Sigma)$.
    Let $u,\mu,\nu,J,\{ \nu_p \}$ be as in Lemma~\ref{lem:concentration-compactness}.
    Then
    \begin{align}
        \label{eqn:volume-consequence} 1 & = \nu(\Sigma) = \lV u \rV_{2^\ast}^{2^\ast} + \sum_{p \in J} \nu_p , \\
        \label{eqn:energy-consequence} \Yext(j) & = \mu(\Sigma) \geq \mE(u) + A_n\sum_{p \in J} \nu_p^{(n-2)/n} .
    \end{align}
    In particular, $\lV u \rV_{2^\ast} \in [0,1]$ and $\nu_p \in (0,1]$ for all $p \in J$.
    Combining Assumption~\eqref{eqn:aubin-comparison} with Inequality~\eqref{eqn:energy-consequence} yields
    \begin{equation*}
        \Yext(j) \geq \Yext(j) \Bigl( \lV u \rV_{2^\ast}^2 + \sum_{p \in J} \nu_p^{(n-2)/n} \Bigr)
    \end{equation*}
    with equality only if $J = \emptyset$ and $\mE(u) = \Yext(j) \lV u \rV_{2^\ast}^2$.
    Equation~\eqref{eqn:volume-consequence} and the concavity of the function $x \mapsto x^{(n-2)/n}$ on $[0,\infty)$ yield
    \begin{equation*}
        \Yext(j) \Bigl( \lV u \rV_{2^\ast}^2 + \sum_{p \in J} \nu_p^{(n-2)/n} \Bigr) \geq \Yext(j) ,
    \end{equation*}
    and hence $J = \emptyset$ and $u$ is a minimizer of $\Yext(j)$.
    The conclusion follows from elliptic regularity and the strong maximum principle (cf.\ \cite{Trudinger1968}*{Theorem~3}).
\end{proof}

\section{The Frank--Lieb argument}
\label{sec:rigidity}

In this section we adapt the Frank--Lieb argument~\cites{FrankLieb2012b,Case2019fl}, which is a rearrange\-ment-free method to prove sharp Sobolev inequalities, to the extrinsic conformal Laplacian of submanifolds of the sphere.
As an application, we prove Theorem~\ref{thm:michael-simon-sphere}.

The basic idea underlying the Frank--Lieb argument is that if $u$ is a minimizer of the extrinsic Yamabe constant~\eqref{eqn:defn-extrinsic-yamabe-constant} of a conformal immersion into the round $N$-sphere, then summing the second variations in the directions $x^iu$ gives rise to a reverse Poincar\'e inequality from which one can deduce rigidity.
This argument is significantly simplified by using the action of the conformal group of $S^N$ to assume that the conformal immersion is balanced in the sense of Li and Yau~\cite{LiYau1982}, thereby breaking conformal invariance.

\begin{lemma}[\cite{Urbano2025}*{Proposition~1.8}]
    \label{lem:balance}
    Let $j \colon (\Sigma^n,g) \to (S^N,d\theta^2)$ be a conformal immersion of a finite-volume Riemannian manifold-with-boundary into the round $N$-sphere such that $j^{-1}(\{p\})$ has measure zero for each $p \in S^N$.
    Given a positive function $f \in C^\infty(\Sigma)$, there is a $\Phi \in \Conf(S^N)$ such that
    \begin{equation*}
        \int_\Sigma (j^\ast \Phi^\ast x^i) f \dvol_g = 0
    \end{equation*}
    for all $i \in \{ 0 , 1 , \dotsc , N \}$, where $x^i$ are the restriction of the Cartesian coordinates to the unit sphere $S^N \subset \mathbb{R}^{N+1}$.
\end{lemma}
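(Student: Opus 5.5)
This is the Hersch / Li–Yau balancing argument, run through a degree-theory (Brouwer fixed point) construction on the closed unit ball $\overline{B}^{N+1}$.

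\emph{Step 1: parametrize the conformal group.} For $a \in B^{N+1}$ (the open unit ball), let $\Phi_a \in \Conf(S^N)$ be the conformal dilation fixing $\pm a/\lv a\rv$ and pushing mass toward $a/\lv a\rv$. Concretely, $\Phi_a$ is the conjugate by stereographic projection from $-a/\lv a \rv$ of the dilation $x \mapsto (1-\lv a\rv)^{-1}(1+\lv a\rv)x$, or equivalently the explicit Möbius formula
\begin{equation*}
    \Phi_a(x) = a + \frac{(1-\lv a \rv^2)(x + \lp x,a\rp \text{-correction})}{\lv x + a \rv^2\text{-type}}.
\end{equation*}
For $a \neq 0$, $\Phi_a(x) \to a/\lv a\rv$ as $\lv a\rv \to 1$, locally uniformly for $x \neq -a/\lv a\rv$. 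Set $\Phi_0 = \mathrm{id}$.

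\emph{Step 2: define the center-of-mass map.} Let $d\mu := f\dvol_g$, which has finite total mass if we assume $f$ is bounded. (If $f$ is not integrable we replace $\mu$ by a finite measure; presumably the intended setting has $\int f<\infty$, and I would add this hypothesis or restrict to $f\in L^1$.) Define
\begin{equation*}
    F(a) := \frac{1}{\mu(\Sigma)} \int_\Sigma \Phi_a(j(p)) \, d\mu(p) \in \overline{B}^{N+1}.
\end{equation*}
Since $\Phi_a(j(p))\in S^N$, the average lies in the closed ball. Dominated convergence shows $F$ is continuous on the open ball $B^{N+1}$.

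\emph{Step 3: boundary behavior.} As $a \to \xi \in S^N$, $\Phi_a \circ j \to \xi$ pointwise off the set $j^{-1}(\{-\xi\})$. By hypothesis this set has measure zero; since $\mu$ is absolutely continuous with respect to $\dvol_g$, it also has $\mu$-measure zero. Dominated convergence then gives $F(a)\to \xi$. Hence $F$ extends continuously to $\overline{B}^{N+1}$ with $F|_{S^N}=\mathrm{id}$.

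\emph{Main obstacle: continuity of the extension at the boundary.} This is the step needing care. We need $F(a_k)\to\xi$ whenever $a_k\to\xi$, including along sequences approaching $\xi$ from varying directions with different axes $a_k/\lv a_k\rv$. The fix is to check that $\Phi_{a_k}(x)\to\xi$ for every $x\neq -\xi$ along any such sequence, which follows from the explicit formula. The exceptional points converge to $-\xi$, so the set where convergence fails is contained in $j^{-1}(\{-\xi\})$ up to a limiting argument. This is exactly where the measure-zero hypothesis enters.

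\emph{Step 4: topological conclusion.} A continuous map $\overline{B}^{N+1}\to\overline{B}^{N+1}$ that restricts to the identity on the boundary sphere is surjective. Otherwise, composing it with radial projection from a missed point would give a retraction of the ball onto $S^N$, which is impossible by Brouwer. So there is some $a$ with $F(a)=0$. Taking $\Phi=\Phi_a$, we get
\begin{equation*}
    \int_\Sigma (x^i\circ\Phi\circ j)\, f\dvol_g = 0
\end{equation*}
for every $i$, which is the claimed identity once we read $j^\ast\Phi^\ast x^i$ as $x^i\circ\Phi\circ j$.
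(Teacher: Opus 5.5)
The paper gives no proof of its own here; it only cites Urbano. Your argument is the standard Hersch--Li--Yau conformal center-of-mass proof, and it is correct. You push $f\,dV_g$ forward to a finite atomless measure on $S^N$, form the barycenter map $F$, extend it by the identity on $S^N$, and invoke the no-retraction theorem. Your caveat that the statement needs $\int_\Sigma f\,dV_g<\infty$ is justified. It holds in both places the paper uses the lemma: $f=u^{2n/(n-2)}$ with $\lVert u\rVert_{2^\ast}=1$ in the Frank--Lieb argument, and $f=1$ on compact $\Sigma$ in the El Soufi--Ilias estimate.

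Step 1 needs two small repairs.

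\emph{Direction of the dilation.} With the usual convention, stereographic projection from $-a/\lvert a\rvert$ sends that point to $\infty$. Conjugating the dilation by the factor $(1+\lvert a\rvert)/(1-\lvert a\rvert)>1$ then pushes mass toward $-a/\lvert a\rvert$, not toward $a/\lvert a\rvert$. This contradicts the limit you state. Use the reciprocal factor instead. Alternatively, keep it and note that $F|_{S^N}=-\mathrm{id}$, which equally forces $0\in F(\overline{B}^{N+1})$.

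\emph{Explicit formula.} Replace the placeholder formula by
\[
\Phi_a(x)=\frac{(1-\lvert a\rvert^2)\,x+2\bigl(1+\langle a,x\rangle\bigr)a}{\lvert x+a\rvert^2}.
\]
For this map:
\begin{itemize}
\item $\lvert\Phi_a(x)\rvert=1$, since the numerator has length $1+2\langle a,x\rangle+\lvert a\rvert^2=\lvert x+a\rvert^2$;
\item $\Phi_0=\mathrm{id}$;
\item the right-hand side is jointly continuous on $(\overline{B}^{N+1}\times S^N)\setminus\{(\xi,-\xi):\xi\in S^N\}$, with value $\xi$ at $(\xi,x)$.
\end{itemize}
This settles your ``main obstacle'' outright. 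For every fixed $x\neq-\xi$ and every sequence $a_k\to\xi$, whatever the axes $a_k/\lvert a_k\rvert$, one has $\Phi_{a_k}(x)\to\xi$. So pointwise convergence fails only on $j^{-1}(\{-\xi\})$, and dominated convergence applies with no further limiting argument.
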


An important step in deducing an effective reverse Poincar\'e inequality from the second variation argument is to compute the weighted sum $\sum j^\ast x^i[L,j^\ast x^i]$ of the commutators of the extrinsic conformal Laplacian and pullbacks of the standard Cartesian coordinates on $S^N$, regarded as multiplication operators.
Since $L$ is second order, this is a simple computation (cf.\ \cite{Case2019fl}*{Theorem~1.3}).

\begin{lemma}
    \label{lem:commutator}
    Let $j \colon (\Sigma^n,g) \to (S^N,d\theta^2)$ be an isometric immersion.
    Then
    \begin{equation*}
        \sum_{i=0}^N j^\ast x^i [ L , j^\ast x^i ] = n .
    \end{equation*}
\end{lemma}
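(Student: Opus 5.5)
Since $L = \Delta + \frac{n-2}{2}Q$ and the zeroth-order term $\frac{n-2}{2}Q$ commutes with multiplication operators, the commutator only sees the Laplacian. Write $f_i := j^\ast x^i$. With the sign convention $\Delta = -\nabla^\alpha\nabla_\alpha$, the product rule gives, for any $u \in C^\infty(\Sigma)$,
\begin{equation*}
    [L,f_i]u = \Delta(f_iu) - f_i\Delta u = (\Delta f_i)u - 2\lp \nabla f_i , \nabla u \rp .
\end{equation*}
Multiplying by $f_i$ and summing over $i \in \{0,1,\dotsc,N\}$ yields
\begin{equation*}
    \sum_{i=0}^N f_i[L,f_i]u = \Bigl( \sum_{i=0}^N f_i\Delta f_i \Bigr) u - \Bigl\lp \nabla \sum_{i=0}^N f_i^2 , \nabla u \Bigr\rp .
\end{equation*}
The plan is therefore to evaluate the two sums $\sum f_i^2$ and $\sum f_i \Delta f_i$ using the geometry of $S^N \subset \bR^{N+1}$.

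First, $\sum_i f_i^2 = j^\ast\bigl( \sum_i (x^i)^2 \bigr) = 1$ because $j$ maps into the unit sphere. Hence the gradient term vanishes identically.

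Second, differentiating this identity twice gives
\begin{equation*}
    0 = -\frac{1}{2}\Delta\sum_{i=0}^N f_i^2 = \sum_{i=0}^N f_i\Delta f_i - \sum_{i=0}^N \lv \nabla f_i \rv^2 .
\end{equation*}
So it suffices to show that $\sum_i \lv \nabla f_i \rv^2 = n$. Regard $F := (f_0,\dotsc,f_N) \colon \Sigma \to \bR^{N+1}$. Then $\sum_i \lp \nabla f_i , \nabla f_i \rp = \tr_g F^\ast(dx^2)$. Because $j$ is an isometric immersion into $S^N$, and $S^N$ carries the metric induced from $\bR^{N+1}$, we have $F^\ast dx^2 = j^\ast d\theta^2 = g$. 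Its $g$-trace is $n$. Consequently $\sum_i f_i [L,f_i]$ is multiplication by $n$.

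There is essentially no obstacle in this computation. The only points needing care are, first, the sign convention for $\Delta$, which makes $\Delta(f u) - f\Delta u = (\Delta f)u - 2\lp\nabla f,\nabla u\rp$. Second, the use of isometry rather than mere conformality: for a conformal immersion, the same argument would give $\sum_i \lv\nabla f_i\rv^2 = n e^{2\omega}$ instead of $n$.

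As a cross-check, one can avoid the second-derivative identity and use the known formula $\Delta F = -nH_{\bR^{N+1}}$ for the mean curvature vector of $\Sigma$ in $\bR^{N+1}$. Its component normal to $S^N$ equals $nF$, so $\sum_i f_i\Delta f_i = \lp F, nF\rp = n$, which confirms the result.
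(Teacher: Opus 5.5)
Your proof is correct. It differs from the paper's only in how you get $\sum_i f_i\Delta f_i = n$.

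The paper starts from the formula $\Delta f_i = nf_i + nH(df_i)$ for the Laplacian of the restricted coordinate functions. It then uses $\sum_i f_i^2 = 1$ twice: to remove the gradient term, and to remove the mean-curvature term. The latter vanishes because $\sum_i x^i\,dx^i = \frac12 d\bigl(\sum_i (x^i)^2\bigr)$ annihilates $H$, which is tangent to $S^N$. This is exactly your cross-check via the formula for the Laplacian of the position vector.

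Your main route never mentions the mean curvature. Differentiating the constraint $\sum_i f_i^2 = 1$ twice gives $\sum_i f_i\Delta f_i = \sum_i \lvert\nabla f_i\rvert^2$, and the isometry condition gives $\sum_i \lvert \nabla f_i\rvert^2 = \tr_g g = n$. This is slightly more elementary. It shows that the identity needs only first-order information about the immersion plus $\lvert F\rvert \equiv 1$. That is the same identity $\sum_i \lvert\nabla (j^\ast x^i)\rvert^2 = n$ the paper later uses in its proof of the El Soufi--Ilias estimate. The paper's route instead makes the link to the second-order structure ($\Delta F$ and the mean curvature vector) explicit.

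One harmless slip: with $\Delta = -\nabla^\alpha\nabla_\alpha$ one has $\frac12\Delta(f^2) = f\Delta f - \lvert\nabla f\rvert^2$. Your displayed identity should therefore read $0 = \frac12\Delta\sum_i f_i^2$, not $-\frac12\Delta\sum_i f_i^2$. Since the left side is zero, the conclusion is unaffected.
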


\begin{proof}
    Recall that $\Delta^g j^\ast x^i = nj^\ast x^i + nH(dj^\ast x^i)$, where $H(dj^\ast x^i)$ denotes the canonical pairing of the mean curvature vector $H$ and the one-form $dj^\ast x^i$.
    It follows that
    \begin{equation*}
        [L,j^\ast x^i]u = [\Delta^g,j^\ast x^i]u = nuj^\ast x^i + nuH(dj^\ast x^i) - 2\lp \nabla^g j^\ast x^i , \nabla^g u \rp .
    \end{equation*}
    The conclusion now follows from the identity $\sum (j^\ast x^i)^2 = 1$.
\end{proof}

In order to deduce rigidity, we also require a classification of certain minimal submanifolds of $(S^N,d\theta^2)$ with volume equal to that of the round $n$-sphere.

\begin{lemma}
    \label{lem:minimal-volume}
    Let $j \colon (\Sigma^n,g) \to (S^N,d\theta^2)$ be a minimal isometric immersion of a Riemannian manifold-with-boundary into the round $n$-sphere such that $1 \in W_0^{1,2}(\Sigma)$ and $\Vol(\Sigma) \leq \Vol(S^n)$.
    Then $j$ is totally geodesic and $(\Sigma^n,g)$ is quasi-equal to $(S^n,d\theta^2)$.
\end{lemma}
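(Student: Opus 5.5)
Since $j$ is minimal, the coordinate functions $j^\ast x^i$ (restrictions of Cartesian coordinates of $\mathbb{R}^{N+1}$) satisfy $\Delta^g j^\ast x^i = n\, j^\ast x^i$. The plan is to feed this into a monotonicity formula with the point of view at the origin of $\mathbb{R}^{N+1}$, i.e.\ to regard $j$ as giving a minimal cone over $\Sigma$ in $\mathbb{R}^{N+1}$. Concretely, I would use the identity
\[
\sum_i \lvert\nabla j^\ast x^i\rvert^2 = n, \qquad \sum_i (j^\ast x^i)^2=1,
\]
together with integration by parts. Integration by parts is justified because $1\in W^{1,2}_0(\Sigma)$: there are $\phi_k\in C_c^\infty(\Sigma\setminus\partial\Sigma)$ with $\phi_k\to1$ in $W^{1,2}$. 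Hence boundary terms disappear in the limit, and $\Sigma$ behaves like a closed minimal submanifold.

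The main step is to show that for each unit vector $a\in\mathbb{R}^{N+1}$, after possibly reflecting, the quantity $\Vol(\Sigma)$ dominates the volume of $S^n$, with equality characterizing totally geodesic spheres. The cleanest route seems to be the cone $C=\{t\,j(p): t\in(0,1),\,p\in\Sigma\}$, which is an $(n+1)$-dimensional minimal cone in $\mathbb{R}^{N+1}$. Its density at the origin is
\[
\Theta=\frac{\Vol(\Sigma)}{\Vol(S^n)}\le 1 .
\]
At any point $q=j(p)$ of the cone (away from the vertex) the density is at least $1$. Upper semicontinuity of density along the ray through $q$ then shows that the density at $0$ is at least $1$.

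To implement the monotonicity rigorously with $1\in W^{1,2}_0$ replacing closedness, I would first derive the first variation formula on the cone for the test vector fields $\psi(|x|)\phi_k x$, and then pass $k\to\infty$. The gradient term is controlled by $\|\nabla\phi_k\|_{L^2}\to0$ via Cauchy--Schwarz against bounded quantities. This yields the monotonicity of $r^{-n-1}\Vol(C\cap B_r(y))$ for $y$ on the cone. Together with density $\ge1$ at smooth points, it gives $\Vol(\Sigma)\ge\Vol(S^n)$.

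Equality then forces the monotonicity quantity to be constant, so $C$ is a cone with respect to every one of its points. A cone that is conical with respect to two different vertices is translation invariant along the line joining them. Iterating over all points of $C$ shows that $C$ is contained in a linear $(n+1)$-plane, so $j$ maps into a great $S^n$ and is totally geodesic. Then $j$ is a local isometry into $S^n$ with $\Vol(\Sigma)=\Vol(S^n)$, which makes $j$ injective almost everywhere onto an open $\Sigma'$ of full measure. Using $1\in W^{1,2}_0(\Sigma)$, the cutoffs $1-\phi_k$ pushed to $S^n$ show that $\mathrm{cap}(S^n\setminus\Sigma')=0$, so $(\Sigma,g)$ is quasi-equal to $(S^n,d\theta^2)$.

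The main obstacle is the monotonicity-with-equality step when $\Sigma$ is incomplete: showing that the limiting boundary terms genuinely vanish, and that the ``density $\ge1$'' step holds at points near the missing set. If the monotonicity route proves too delicate, I would try a fallback: a Li--Yau/Hersch-type argument using Lemma~\ref{lem:balance} and the conformal volume, aiming to show $\Vol(\Sigma)\ge\Vol(S^n)$ directly from the test functions $j^\ast x^i$.
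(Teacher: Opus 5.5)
Your proposal is correct and follows essentially the same route as the paper. Both arguments use cutoffs $\varphi_k \to 1$ in $W^{1,2}(\Sigma)$ to show that the varifold determined by $j$, or equivalently its cone in $\mathbb{R}^{N+1}$, is stationary. Both then apply the monotonicity formula to the cone, whose vertex density $\Vol(\Sigma)/\Vol(S^n)\leq 1$ forces it to be a multiplicity-one flat $(n+1)$-disk; $j$ is then a totally geodesic embedding, and $1\in W_0^{1,2}(\Sigma)$ gives $\mathrm{cap}(S^n\setminus\Sigma')=0$.

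Three small corrections to your write-up. Stationarity must be checked for arbitrary vector fields (or fields centered at each $y$), not only $\psi(\lvert x\rvert)\phi_k x$, which gives only the trivial monotonicity at the vertex; and it should be checked on the full cone $t>0$, so that $r\to\infty$ is available at the other vertices. Your worry about densities near the missing set does not arise, because density $\geq 1$ is only needed at points of $j(\Sigma)$, which are dense in the support. Finally, the multiplicity-one conclusion gives injectivity a.e., and this upgrades to injectivity everywhere because $j$ is an open map into $S^n$.
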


\begin{remark}
    This recovers the well-known fact that the only compact minimal submanifold $\Sigma^n$ of $(S^N,d\theta^2)$ with $\Vol(\Sigma^n) \leq \Vol(S^n)$ is the totally geodesic $n$-sphere $S^n \subset S^N$~\citelist{ \cite{Brendle2012}*{Corollary~3} \cite{ChengLiYau1984}*{Corollary~5} \cite{Chen1971}*{Theorem~3} }.
\end{remark}

\begin{proof}
    The proof uses geometric measure theory;
    we follow Simon~\cite{Simon83} for notation.

    As an isometric immersion of a finite-volume manifold, $j$ determines an $n$-varifold $[\Sigma]$ supported on $\overline{j(\Sigma)}$.
    Pick a sequence $(\varphi_k)_{k\in\bN}$ in $C_c^\infty(\Sigma)$ that converges to $1$ in $W^{1,2}(\Sigma)$.
    Our first goal is to show that $[\Sigma]$ is stationary.

    Denote by $\delta[\Sigma](S)$ the first variation~\cite{Simon83}*{Equation~39.2} with respect to a smooth vector field $X$ tangent to $S^N$.
    Direct computation then yields
    \begin{align*}
        \biggl| \delta[\Sigma](X) -\delta[\Sigma](\varphi_k X) \biggr| & = \biggl| \int_{\Sigma} \text{div}_{\Sigma}(X) - \text{div}_{\Sigma}(\varphi_k X) d\mathcal{H}^n \biggr|\\
            & \leq \bigg|\int_{\Sigma} (1-\varphi_k) \text{div}_{\Sigma}(X)d\mathcal{H}^{n}\bigg| + \biggl|\int_{\Sigma} \nabla^{\Sigma} \varphi_k\cdot X d\mathcal{H}^n\biggr| .
    \end{align*}
    Since $\varphi_k \to 1$ in $W^{1,2}(\Sigma)$, we deduce that
    \begin{equation*}
        \label{eqn:stationarity}
         \delta[\Sigma](X) = \lim_{k\rightarrow \infty}\delta[\Sigma](\varphi_k X) .
    \end{equation*}
    For each $k$, we may pick an open set $U_k \subset \Sigma$ such that $\supp \varphi_k \subset U_k$.
    Since $\Sigma$ is minimal, we readily deduce that
    \begin{equation*}
        \bigl| \delta[\Sigma](X) \bigr| \leq \lim_{k\rightarrow \infty} \bigl| \delta [\Sigma] (\varphi_kX) \bigr| =\lim_{k\rightarrow \infty} \bigl| \delta[\Sigma] \llcorner  U_k (\varphi_k X) \bigr| = 0 .
    \end{equation*}
    Therefore $[\Sigma]$ is a stationary.

    Now, since $[\Sigma]$ is stationary in $S^N$, the cone $C[\Sigma]$ over $[\Sigma]$ is a stationary $(n+1)$-varifold in $\bR^{N+1}$.
    Since $\Vol(\Sigma^n) \leq \Vol(S^n)$, we deduce from the Monotonicity Formula~\cite{Allard1972} that $C[\Sigma] \llcorner B^{N+1} = [B^{n+1}]$, and hence $[\Sigma] = [S^n]$ and $j$ is an embedding.
    Since $1 \in W_0^{1,2}(\Sigma)$, we readily conclude that $S^n \setminus \Sigma$ has capacity zero.
\end{proof}

We now carry out the Frank--Lieb argument.

\begin{proposition}
    \label{prop:frank-lieb}
    Let $j \colon (\Sigma^n,g) \to (S^N,d\theta^2)$ be a conformal immersion of a Riemannian manifold-with-boundary into the round $N$-sphere.
    Suppose that $u$ is a minimizer of $\Yext(j)$.
    Then $(\Sigma^n,g_u := u^{\frac{4}{n-2}}g)$ is quasi-equal to $(S^n,d\theta^2)$ and, after left-composing with an element of $\Conf(S^N)$ if necessary, $j \colon (\Sigma^n,g_u) \to (S^N,d\theta^2)$ is a totally geodesic isometric embedding.
    In particular,
    \begin{equation*}
        \Yext(j) = A_n .
    \end{equation*}
\end{proposition}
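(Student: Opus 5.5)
Since $u$ minimizes $\Yext(j)$, it can be taken positive and smooth (elliptic regularity and the strong maximum principle, as in Proposition~\ref{prop:aubin-existence}). By conformal invariance (Lemmas~\ref{lem:energy-invariance} and~\ref{lem:yamabe-constant-conformal-invariant}) I replace $g$ by $g_u = u^{\frac{4}{n-2}}g$. Then $1$ minimizes the extrinsic Yamabe energy for $j \colon (\Sigma,g_u) \to (S^N,d\theta^2)$, with $\int_\Sigma \dvol_{g_u} = 1$; here $1$ lies in the completion $W_0^{1,2}$, which is the sense needed in Lemma~\ref{lem:minimal-volume}. The Euler--Lagrange equation then reads $\frac{n-2}{2}Q = Y$ with $Y := \Yext(j)$, so $Q$ is constant with respect to $g_u$.

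Next I use Lemma~\ref{lem:balance} with $f=1$ to left-compose $j$ with $\Phi\in\Conf(S^N)$ so that $\int_\Sigma j^\ast x^i \,\dvol = 0$ for all $i$. The measure-zero hypothesis on fibres holds since $j$ is an immersion. By Lemma~\ref{lem:yamabe-constant-conformal-invariant} and naturality, $1$ remains a minimizer, but $j$ is now only conformal, so I rescale the metric by the pullback of the conformal factor of $\Phi$. Equivalently, I pick the metric in the conformal class making $j$ isometric, and call the minimizer $u$ again. Balancing means $\int u^{2^\ast} j^\ast x^i = 0$, where $2^\ast := \frac{2n}{n-2}$. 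So the functions $v_i := u\, j^\ast x^i$ are admissible directions tangent to the constraint $\int |u|^{2^\ast}=1$ at first order.

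The second variation of $\mF$ at $u$ in the direction $v_i$ gives
\begin{equation*}
\mQ(v_i,v_i) \geq (2^\ast-1)\, Y \int u^{2^\ast-2} v_i^2 \dvol .
\end{equation*}
I sum over $i$ and use $\sum (j^\ast x^i)^2=1$. The left side becomes $\mQ(u,u) + \sum \int u\, j^\ast x^i [L, j^\ast x^i] u$, which by Lemma~\ref{lem:commutator} equals $Y + n\int u^2 \dvol$. The right side is $\frac{n+2}{n-2}Y$. Hence
\begin{equation*}
n \int u^2 \dvol \geq \frac{4}{n-2}Y .
\end{equation*}
On the other hand, $Y = \mE(u) \geq \frac{n-2}{2}\int Qu^2$. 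I now use the pointwise bound $\frac{2}{n}Q = \lv H\rv^2 + 1 \geq 1$ for isometric immersions into $S^N$, from Equation~\eqref{eqn:q-on-einstein}, together with $\int\lv\nabla u\rv^2\ge0$. This gives $Y \geq \frac{n(n-2)}{4}\int u^2$. Comparing the two inequalities forces equality throughout: $\nabla u = 0$ and $H=0$, so $u$ is constant and $j$ is minimal. Then $Y = \frac{n(n-2)}{4}\Vol(\Sigma)^{2/n}$, since $u = \Vol^{-(n-2)/(2n)}$.

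By Lemma~\ref{lemma:aubin-estimate}, $Y\le A_n$, which gives $\Vol(\Sigma)\le\Vol(S^n)$. Since $u$ is constant and lies in $W_0^{1,2}$, we also have $1\in W_0^{1,2}(\Sigma)$. Lemma~\ref{lem:minimal-volume} now shows that $j$ is totally geodesic and $\Sigma$ is quasi-equal to $S^n$. Capacity zero of the complement then gives $\Vol(\Sigma)=\Vol(S^n)$ and $Y=A_n$. Undoing the normalizations yields the statement for $g_u$ up to homothety, which is absorbed into the normalization of $u$.

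The main obstacle is the second variation step on a noncompact manifold-with-boundary. The test functions $u\,j^\ast x^i$ must lie in $W_0^{1,2}$, which holds because $j^\ast x^i$ is smooth and bounded with bounded gradient. The second-variation inequality must also be justified by density, and one must check that the balancing can be done with weight $u^{2^\ast}$ while preserving the minimizing property after the conformal change. I would handle these by working with $u\in W_0^{1,2}$ throughout and approximating by $C_c^\infty$ functions.
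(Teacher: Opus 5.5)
Your proposal is correct and follows essentially the same route as the paper. You balance via Lemma~\ref{lem:balance}; using weight $1$ after passing to $g_u$ is equivalent to the paper's weight $u^{\frac{2n}{n-2}}$. You then sum the second variations along $u\,j^\ast x^i$ using Lemma~\ref{lem:commutator} to force $\nabla u=0$ and $H=0$, and conclude with Lemma~\ref{lemma:aubin-estimate} and Lemma~\ref{lem:minimal-volume}; your remarks on $1\in W_0^{1,2}$, on capacity zero giving $\Vol(\Sigma)=\Vol(S^n)$, and on the normalization of $u$ supply details the paper leaves implicit.
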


\begin{proof}
    We may assume without loss of generality that $u>0$.
    By Lemma~\ref{lem:balance}, we may assume, by left-composing with an element of $\Conf(S^N)$ if necessary, that
    \begin{align*}
        \int_{\Sigma} (j^\ast x^i) u^{\frac{2n}{n-2}} \dvol_{g} =0 
    \end{align*}
    for all $i \in \{ 0, \dotsc , m+n \}$.
    By conformal invariance, we may also assume $g = j^\ast d\theta^2$.
    
    Since $u$ minimizes the extrinsic Yamabe constant, we compute that
    \begin{equation*}
        0 \leq \frac{1}{2} \sum_{i=0}^N \left. \frac{d^2}{dt^2} \right|_{t=0} \mF\bigl((1+tx^i)u\bigr) = \sum_{i=0}^N \int_\Sigma ux^i L(ux^i) \dvol - \frac{n+2}{n-2} \int_M uLu \dvol .
    \end{equation*}
    Combining this with Lemma~\ref{lem:commutator} yields
    \begin{equation*}
        0 \geq \int_\Sigma u \Bigl( Lu - \frac{n(n-2)}{4}u \Bigr) \dvol = \int_\Sigma \Bigl( \lv \nabla u \rv^2 + \frac{n(n-2)}{4}\lv H \rv^2u^2 \Bigr) \dvol ,
    \end{equation*}
    where the equality uses Lemma~\ref{lem:energy-invariance} and the formula~\eqref{eqn:q-on-einstein} for the $Q$-curvature of an isometrically immersed submanifold of $(S^N,d\theta^2)$.
    It follows that $u$ is constant and $H=0$.
    Combining this with Lemma~\ref{lemma:aubin-estimate} yields
    \begin{equation*}
        \frac{n(n-2)}{4}\Vol(\Sigma)^{2/n} = \Yext(j) \leq A_n = \frac{n(n-2)}{4}\Vol(S^n)^{2/n} .
    \end{equation*}
    In particular, $\Vol(\Sigma) \leq \Vol(S^n)$.
    Lemma~\ref{lem:minimal-volume} implies that $j$ is totally geodesic and that $(\Sigma^n,g)$ is quasi-equal to $(S^n,d\theta^2)$.
    The conclusion follows by unwinding the conformal rescaling from $g$ to $j^\ast d\theta^2$.
\end{proof}

\begin{proof}[Proof of Theorem~\ref{thm:michael-simon-sphere}]
    Suppose first that $\Yext(j) < A_n$.
    Then there is a compact manifold-with-boundary $\Omega_i^n \subset \Sigma$ such that $Y(j \vert_{\Omega}) < A_n$.
    Proposition~\ref{prop:aubin-existence} implies that there is a nonnegative minimizer of $\Yext(j \vert_{\Omega_i} )$.
    We then deduce from Proposition~\ref{prop:frank-lieb} that $\Yext(j) = A_n$, a contradiction.

    The final conclusion follows from another application of Proposition~\ref{prop:frank-lieb}.
\end{proof}

\section{Applications}
\label{sec:applications}

In this section we discuss applications of Theorem~\ref{thm:michael-simon-sphere}.

Our first set of applications is to sharp $L^2$-Michael--Simon inequalities.
We begin with the general statement about submanifolds of simply-connected manifolds of constant sectional curvature.

\begin{proof}[Proof of Corollary~\ref{cor:michael-simon-spaceform}]
    Every simply-connected spaceform is conformally develop\-able~\cite{Kuiper1949}.
    The conclusion readily follows from Equation~\eqref{eqn:q-on-einstein} and Theorem~\ref{thm:michael-simon-sphere}.
\end{proof}

In particular, we obtain the sharp $L^2$-Michael--Simon inequality.

\begin{proof}[Proof of Theorem~\ref{thm:sharp-michael-simon}]
    Apply Corollary~\ref{cor:michael-simon-spaceform} to $(\mathbb{R}^N,dx^2)$.
\end{proof}

We also obtain the sharp Sobolev inequality on minimal submanifolds of simply-connected spaceforms.

\begin{proof}[Proof of Corollary~\ref{cor:minimal-michael-simon}]
    Apply Corollary~\ref{cor:michael-simon-spaceform} with $H=0$.
\end{proof}

Our second set of applications involve sharp geometric inequalities for conformal immersions into the round sphere.
We begin with a sharp inequality involving the $L^2$-norm of $H$.

\begin{proof}[Proof of Corollary~\ref{cor:willmore}]
    Suppose first that $n \geq 3$.
    Since $\Sigma$ is compact, taking $u=1$ in Theorem~\ref{thm:michael-simon-sphere} yields
    \begin{equation*}
        \int_\Sigma Q \dvol \geq \frac{n}{2} \Vol(S^n)^{\frac{2}{n}} \Vol(\Sigma)^{\frac{n-2}{n}}
    \end{equation*}
    with equality if and only if $j$ is umbilic and $g$ is round.

    Suppose now that $n = 2$.
    By composing with stereographic projection, we may regard $j$ as a conformal immersion $j \colon (\Sigma^2,g) \to (\bR^N,dx^2)$.
    Since $\Sigma$ is compact, integrating Equation~\eqref{eqn:q-transformation} yields
    \begin{equation*}
        \int_\Sigma Q \dvol = \int_\Sigma Q^{j^\ast dx^2} \dvol_{j^\ast dx^2} = \int_\Sigma \lv H \rv^2 \dvol_{j^\ast dx^2} .
    \end{equation*}
    The conclusion follows from a result of Li and Yau~\cite{LiYau1982}*{Theorem~6}.
\end{proof}

We now consider a sharp inequality involving the $L^{n/2}$-norm of $H$.

\begin{proof}[Proof of Corollary~\ref{cor:chen-willmore}]
    Recall that $Q \leq Q_+$.
    The conclusion follows trivially from Corollary~\ref{cor:willmore} if $n=2$.
    Suppose now that $n \geq 3$.
    Combining Corollary~\ref{cor:willmore} with H\"older's inequality yields
    \begin{equation*}
        \Vol(S^n) \Vol(\Sigma)^{\frac{n-2}{2}} \leq \left( \frac{2}{n}\int_\Sigma Q_+ \dvol \right)^{\frac{n}{2}} \leq \left( \frac{2}{n} \right)^{\frac{n}{2}} \Vol(\Sigma)^{\frac{n-2}{2}} \int_\Sigma Q_+^{n/2} \dvol
    \end{equation*}
    with equality if and only if $(\Sigma,g)$ is round and $j$ is umbilic.
\end{proof}

Our third application, which is instead of the ideas used to prove Theorem~\ref{thm:michael-simon-sphere}, is to slightly streamline the proof of the El Soufi--Ilias eigenvalue estimate.

\begin{proof}[Proof of Theorem~\ref{thm:elsoufi-ilias}]
    Lemma~\ref{lem:balance} implies that, by left-composing $j$ with an element of $\Conf(S^N)$ if necessary, we may assume that $j^\ast x^i$ has mean zero for each $i \in \{ 0 , 1, \dotsc, N \}$.

    Since $j$ is conformal, there is an $\Upsilon \in C^\infty(M)$ such that $g = e^{2\Upsilon}j^\ast d\theta^2$.
    Equations~\eqref{eqn:q-transformation} and~\eqref{eqn:energy-invariance} imply that
    \begin{align*}
        \int_\Sigma Q^g \dvol_g & = \int_\Sigma Q^{j^\ast d\theta^2} \dvol_{j^\ast d\theta^2} , && \text{if $n=2$} , \\
        \frac{n-2}{2}\int_\Sigma Q^g \dvol_g & = \mathcal{E}^g(1) = \mathcal{E}^{j^\ast d\theta^2}\left( e^{\frac{n-2}{2}\Upsilon} \right) , && \text{if $n \geq 3$} ,
    \end{align*}
    respectively.
    Since $Q^{j^\ast d\theta^2} \geq \frac{n}{2}$, we deduce that
    \begin{equation}
        \label{eqn:first-q-lower-bound}
        \int_\Sigma Q^g \dvol_g \geq \frac{n}{2}\int_\Sigma e^{(n-2)\Upsilon} \dvol_{j^\ast d\theta^2} .
    \end{equation}

    Recall now that
    \begin{equation*}
        \sum_{i=0}^N \lvert \nabla (j^\ast x^i) \rvert_{j^\ast d\theta^2}^2 = n .
    \end{equation*}
    Inserting this into Inequality~\eqref{eqn:first-q-lower-bound} and using the definition of $\Upsilon$ yields
    \begin{equation*}
        \int_\Sigma Q^g \dvol_g \geq \frac{1}{2}\sum_{i=0}^N \int_\Sigma \lvert \nabla (j^\ast x^i) \rvert_{j^\ast d\theta^2}^2 e^{(n-2)\Upsilon} \dvol_{j^\ast d\theta^2} = \frac{1}{2} \sum_{i=0}^N \int_\Sigma \lvert \nabla (j^\ast x^i) \rvert_g^2 \dvol_g .
    \end{equation*}
    Since each $j^\ast x^i$ has zero mean, we conclude that
    \begin{equation*}
        \int_\Sigma Q^g \dvol_g \geq \frac{\lambda_1}{2}\Vol_g(\Sigma) . \qedhere
    \end{equation*}
\end{proof}

\section{Optimality of the extrinsic conformal Laplacian}
\label{sec:sharpness}

In this section we prove that Theorem~\ref{thm:michael-simon-sphere} is optimal, in the sense that one cannot replace the extrinsic conformal Laplacian by a smaller operator.
In order to make this precise, we first classify the space of differential operators with the same properties as $L$.
To that end, recall that a \defn{natural} linear differential operator on isometric immersions $j \colon (\Sigma^n,g) \to (M^N,h)$ is an assignment $D$ to each such immersion of a linear differential operator $D \colon C^\infty(\Sigma) \to C^\infty(\Sigma)$ by a universal linear combination of complete contractions of tensor products of the metrics on the cotangent and conormal bundles, projections of covariant derivatives of the Riemann curvature tensor of $h$, covariant derivatives of the second fundamental form, and covariant derivatives of the input function with respect to $g$;
see~\cite{GrahamKuo2024} for further discussion.

\begin{lemma}
    \label{lem:operators}
    Fix $N > n \geq 2$.
    Denote by $\sD_{n,N}$ the real vector space of natural linear differential operators defined on isometric immersions $j \colon (\Sigma^n,g) \to (M^N,h)$ that satisfy the conformal transformation law~\eqref{eqn:conformal-transformation}.
    \begin{enumerate}
        \item If $N \geq n+2$, then $\{ L , \lv \tfsff \rv^2 , \tr^2 j^\ast W \}$ is a basis for $\sD_{n,N}$.
        \item If $N = n+1$ or if we restrict to conformal immersions into locally conformally flat manifolds, then $\{ L , \lv \tfsff \rv^2 \}$ is a basis for $\sD_{n,N}$.
    \end{enumerate}
\end{lemma}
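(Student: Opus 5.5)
The plan is to reduce the classification to a classification of zeroth-order natural invariants of weight $-2$ that transform tensorially (with weight $e^{-2\Upsilon}$) under conformal change. Let $D \in \sD_{n,N}$. Consider first the principal part. Naturality together with the transformation law~\eqref{eqn:conformal-transformation} forces $D$ to have order two. To see this, test the law against $\Upsilon = t\varphi$ and differentiate in $t$ at $t=0$. The linearized law reads
\begin{equation*}
    \frac{d}{dt}\Big|_{t=0} D^{e^{2t\varphi}g} = -\frac{n+2}{2}\varphi D + \frac{n-2}{2}D\circ\varphi ,
\end{equation*}
and scaling (constant $\varphi$) fixes the homogeneity of $D$: it has weight $-2$. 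Natural operators of weight $-2$ built from $g$, $\sff$, $\nabla$, and the curvature of $h$ have order at most two. Their principal symbol is $a\lv\xi\rv^2$, and the possible first-order terms are $\lp V,\nabla u\rp$ for a natural vector field $V$ of weight $-2$. No such $V$ exists, since the candidates such as $\tr\nabla\sff$ paired with $H$ are not tangent-valued scalars of the right type. Hence $D = a\Delta + \Phi$ with $\Phi$ a natural scalar of weight $-2$.

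Subtracting $aL$ then yields a zeroth-order operator $D - aL = \Psi$ satisfying~\eqref{eqn:conformal-transformation}. For a multiplication operator this law says exactly $\widehat\Psi = e^{-2\Upsilon}\Psi$, so $\Psi$ is a pointwise conformal invariant of weight $-2$. Next I list the natural scalars of weight $-2$. These are $\lv H\rv^2$, $\lv\sff\rv^2$ (equivalently $\lv\tfsff\rv^2$), $R$, $\tr_g j^\ast\Ric^h$, $R^h$, $\tr^2 j^\ast\mathrm{Rm}^h$, and the normal-derivative term $\nabla^\perp$-free contractions. Trading $\Ric^h$ and $\mathrm{Rm}^h$ for $P^h$ and $W^h$, and using the Gauss equation from the proof of Lemma~\ref{lem:conformal-invariance} to eliminate $R$, the basis becomes $\{\lv H\rv^2, \lv\tfsff\rv^2, \tr_g j^\ast P, \tr P^h, \tr^2 j^\ast W\}$. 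The trace-free second fundamental form and the Weyl tensor are conformally invariant, so $\lv\tfsff\rv^2$ and $\tr^2 j^\ast W$ have weight exactly $-2$. The remaining three quantities transform inhomogeneously:
\begin{itemize}
    \item $H$ shifts by the normal part of $\nabla\Upsilon$;
    \item $P$ shifts by $-\nabla^2\Upsilon + d\Upsilon\otimes d\Upsilon - \frac12\lv d\Upsilon\rv^2 h$.
\end{itemize}
Linearizing at $\Upsilon$ with prescribed ambient 2-jet, I find that a combination $b\lv H\rv^2 + c\tr_g j^\ast P + e\tr P^h$ is invariant only if $e=0$, because the normal Hessian of $\Upsilon$ appears only in $\tr P^h$. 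The condition $b = \frac n2 c$ then follows from matching the terms quadratic in $(\nabla\Upsilon)^\perp$. That combination is precisely $cQ$, and $Q$ is not tensorial, since it carries the $\Delta\Upsilon$ term absorbed by $\Delta$ in $L$. Hence $c=0$. It follows that $\Psi$ lies in the span of $\lv\tfsff\rv^2$ and $\tr^2 j^\ast W$, proving spanning.

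For linear independence, first note that $L$ has nonzero principal part while the other two elements are zeroth order. I then exhibit examples separating $\lv\tfsff\rv^2$ from $\tr^2 j^\ast W$:
\begin{itemize}
    \item a non-umbilic hypersurface in flat space has $W=0$ and $\tfsff\neq0$;
    \item a totally geodesic $\Sigma$ in a product such as $S^2\times H^2\times\bR^{N-4}$, or a generic ambient metric with $\Sigma$ a totally geodesic slice, has $\tfsff=0$ but $\tr^2 j^\ast W\neq 0$.
\end{itemize}
This settles case (1). For case (2), if $h$ is locally conformally flat then $W=0$ and the basis drops to $\{L,\lv\tfsff\rv^2\}$. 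If $N=n+1$, I will show that $\tr^2 j^\ast W = \sum_{\alpha,\beta}W(e_\alpha,e_\beta,e_\alpha,e_\beta)$ equals $-\sum_{\alpha} W(e_\alpha,\nu,e_\alpha,\nu)=0$ by trace-freeness of $W$, applied with $W(\nu,\nu,\cdot,\cdot)=0$. The terms $\lv\tfsff\rv^2$ and $L$ stay independent in each setting by the hypersurface example.

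The main obstacle is the weight/invariance bookkeeping for the scalar part: making rigorous, via Graham--Kuo style invariant theory, that the listed scalars exhaust the weight $-2$ natural invariants in arbitrary codimension. Care is needed with the normal-bundle contractions, where $\lv H\rv^2$ and $\lv\sff\rv^2$ are the only quadratic ones, and with excluding first-order terms. I would first do the first-order linearization in $\Upsilon$ on an explicit basis, which reduces everything to finite linear algebra.
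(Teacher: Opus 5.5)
Your proposal is correct and follows the paper's route. Invariant theory gives $D=a\Delta+\Phi$, and subtracting $aL$ leaves a pointwise conformal invariant of weight $-2$. The span of these is $\{\lv\tfsff\rv^2,\tr^2 j^\ast W\}$; you carry out by hand the bookkeeping that the paper delegates to Case--Graham--Kuo--Tyrrell--Waldron. Finally, $\tr^2 j^\ast W$ vanishes in codimension one by the trace-freeness of $W$.

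One small repair is needed in your independence check. $S^2\times H^2$ is locally conformally flat. More generally, the obvious slice $S^2\times\{\mathrm{pt}\}\times\bR^{n-2}$ of $S^2\times H^2\times\bR^{N-4}$ has $\tr^2 j^\ast W=\frac{2(N-2n)}{N-2}$, which vanishes when $N=2n$. Use instead the totally geodesic slice $S^n\times\{0\}\subset S^n\times\bR^{N-n}$, where $\tr^2 j^\ast W=\frac{n(n-1)(N-n)(N-n-1)}{(N-1)(N-2)}\neq0$ for $N\geq n+2$.
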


\begin{proof}
    Let $D \in \sD_{n,N}$.
    Elementary invariant theory yields a constant $c$ and a scalar Riemannian invariant $I$ of homogeneity $-2$ such that $D = c\Delta + I$.
    Therefore $D - cL$ is a scalar conformal invariant of weight $-2$.
    Arguing as in~\cite{CaseGrahamKuoTyrrellWaldron2023a}*{Subsection~6.1}, we see that if $N \geq n+2$, then the space of such scalar conformal invariants is two-dimensional and spanned by $\{ \lv \tfsff \rv^2 , \tr^2 j^\ast W \}$.
    Clearly $\tr^2 j^\ast W$ vanishes when $j$ is an immersion into a locally conformally flat manifold.
    Since $W$ is a section of $S^2\Lambda^2T^\ast M$ and $\tr^2W = 0$ in $M$, we also see that $\tr^2 j^\ast W$ vanishes when $j$ has codimension one.
\end{proof}

In particular, any conformally invariant $L^2$-Michael--Simons inequality for conformal immersions into the round sphere must be stated in terms of the operator $L_\varepsilon := L + \varepsilon \lv \tfsff \rv^2$ for some $\varepsilon \in \bR$.
Since $\lv \tfsff \rv^2$ is nonnegative, we see that if $\varepsilon \geq 0$, then $L_\varepsilon \geq L$;
i.e.
\begin{equation*}
    \int_\Sigma uL_\varepsilon u \dvol \geq \int_\Sigma uLu \dvol
\end{equation*}
for all $u \in \CC^\infty(\Sigma)$.
In particular, the $L^2$-Michael--Simons-type inequality~\eqref{eqn:michael-simon-epsilon} follows immediately from Theorem~\ref{thm:michael-simon-sphere} when $\varepsilon \geq 0$.
This conclusion is sharp in $\varepsilon$.

\begin{proof}[Proof of Theorem~\ref{thm:michael-simon-super-sharp}]
    It suffices to show that if $\varepsilon < 0$, then there is an isometric immersion $j \colon (\Sigma^n,g) \to (\bR^{n+1},dx^2)$ such that
    \begin{equation*}
        \inf \left\{ \int_\Sigma \left( uLu + \varepsilon\lv\tfsff\rv^2u^2 \right) \dvol \suchthat u \in \CC^\infty(\Sigma) , \int_\Sigma u^2 \dvol = 1 \right\} < 0 .
    \end{equation*}

    Set $\Sigma^n := S^1 \times \bR \times \bR^{n-2}$ and define $j \colon \Sigma \to \bR^{n+1}$ by
    \begin{equation*}
        j(\theta,t,y) := (\cosh t \cos\theta , \cosh t \sin\theta , t, y) ;
    \end{equation*}
    i.e.\ $j(\Sigma)$ is the product of a catenoid with $\bR^{n-2}$.
    Direct computation implies that $j$ is a minimal immersion and that the metric $g := j^\ast dx^2$ and the squared length of the trace-free part of the second fundamental form of $j$ are
    \begin{align*}
        g & = (\cosh t)^2( d\theta^2 + dt^2) + dy^2 , \\
        \lv \tfsff \rv^2 & = 2\sech^4t ,
    \end{align*}
    respectively.
    It readily follows that
    \begin{equation}
        \label{eqn:catenoid-energy}
        \int_\Sigma u L_\varepsilon u \dvol = \int_0^{2\pi} \int_\bR \int_{\bR^{n-2}} \bigl( u_\theta^2 + u_t^2 + u_y^2\cosh^2t + 2\varepsilon u^2 \sech^2t \bigr) \, dy \, dt \, d\theta
    \end{equation}
    for all $u \in \CC^\infty(\Sigma)$, where $L_\varepsilon := L + \varepsilon \lv \tfsff \rv^2$.
    Given $R,T>0$, pick $v_R \in \CC^\infty(\bR^{n-2})$ and $w_T \in \CC^\infty(\bR)$ such that
    \begin{align*}
        v_R & \in [0,1] , & \text{$v_R=1$ on $B_R(0)$} , && \text{$v_R=0$ on $\bR^{n-2} \setminus B_{2R}(0)$} , && \lvert \nabla v_R \rvert \leq 2R^{-1} , \\
        w_T & \in [0,1] , & \text{$w_T=1$ on $B_T(0)$} , && \text{$w_T=0$ on $\bR \setminus B_{2T}(0)$} , && \lvert \nabla w_T \rvert \leq 2T^{-1} ,
    \end{align*}
    where all geometric quantities are measured using the flat metric.
    Set $u := v_Rw_T$.
    On the one hand, we readily estimate that
    \begin{equation*}
        \int_0^{2\pi} \int_\bR \int_{\bR^{n-2}} \bigl( u_\theta^2 + u_t^2 + u_y^2\cosh^2t \bigr) \, dy \, dt \, d\theta \leq C\bigl( T^{-1}R^{n-2} + e^{2T}R^{n-4} \bigr)
    \end{equation*}
    for some constant $C>0$ that is independent of $R$ and $T$.
    On the other hand, since $\int_{-1}^1 \sech^2t \, dt > 0$, we see that there is a constant $C'>0$ such that
    \begin{equation*}
        \int_0^{2\pi} \int_\bR \int_{\bR^{n-2}} u^2\sech^2t \, dy \, dt \, d\theta \geq C'R^{n-2}
    \end{equation*}
    for all $R>0$ and $T>1$.
    Inserting these into Equation~\eqref{eqn:catenoid-energy} yields
    \begin{equation*}
        \int_\Sigma u L_\varepsilon u \dvol \leq 2\varepsilon C' R^{n-2} + CT^{-1}R^{n-2} + Ce^{2T}R^{n-4}
    \end{equation*}
    for all $R>0$ and $T>1$.
    This becomes negative for $R = e^{2T} \gg 1$.
\end{proof}

\section*{Declarations}

\subsection*{Acknowledgements}
We thank Matthew J.\ Gursky for suggesting we consider Theorem~\ref{thm:michael-simon-super-sharp}.

\subsection*{Funding}
JSC was partially supported by a Simons Foundation Collaboration Grant for
Mathematicians.
JSC and DM were partially supported by the National Science Foundation under Award No. DMS-2505606.

\subsection*{AI Statement}
AI was not used in any part of this work.
The authors developed the mathematical arguments and wrote the manuscript.

\bibliography{bib}
\end{document}